\newtheorem{theo}{Theorem}
\newtheorem{lem}{Lemma}
\newtheorem{prop}{Proposition}
\numberwithin{equation}{section}
\numberwithin{theo}{section}
\def\home {\hbox{ Homeo }}
\def\ord{\hbox{ord}}
\def\sign { \hbox{sign}}
\DeclareMathOperator{\auto}{Aut}
\DeclareMathOperator{\isom}{Isom}
\title{Equivalence of Group Actions on  Riemann Surfaces}
\author{Mariela Carvacho Bustamante}
\address{Universidad de Chile}
\email{marie.carvacho@gmail.com}
\date{\today}
\thanks{Special thanks to my advisor Rub\'i Rodr\'iguez. }
\begin{document}

\begin{abstract}
\noindent We produce for each natural number $n \geq 3$ two 1--parameter families of Riemann surfaces admitting automorphism groups with two cyclic subgroups $H_{1}$ and $H_{2}$ of orden $2^{n}$, that are conjugate in the group of orientation--preserving homeomorphism of the corresponding Riemann surfaces, but not conjugate in the group of conformal automorphisms.

\noindent This property  implies that the subvariety $\mathcal{M}_{g}(H_{1})$ of the moduli space $\mathcal{M}_{g}$ consisting of the points representing the Riemann surfaces of genus $g$ admitting a group of automorphisms topologically conjugate to $H_{1} $ (equivalently to $H_{2}$ ) is not a normal subvariety.
\end{abstract}

\maketitle

\section{Introduction}
When we consider a group $G$ and say that \textit{$G$ acts on a Riemann surface $S$}, we are saying  that there exists a group  monomorphism from $G$ to $\auto(S)$, where $\auto(S)$ is the group consisting of the self--maps of $S$ (automorphism or bi--holomorphic map) which preserve the  complex structure.

\medskip
\noindent Two subgroups, say $H_{0}\, ,H_{1} < Aut(S )$ are said to be \textit{conformally equivalent} (respectively,
\textit{topologically equivalent}) if there exists an  automorphism (respectively, an homeomorphism) $t : S \rightarrow S$ so that $tH_{0}t^{-1} = H_{1}$.

\noindent It is clear from the definition that any two conformally equivalent subgroups are topologically
equivalent, but the reciprocal is in general false.

\noindent G. Gonz\'alez in \cite{gab1,gab2} proved that if $H_0$ and $H_1$ are cyclic groups of order $p$ prime, $S/H_{0}$ is
the Riemann sphere and $H_{0}$ and $H_1$ are topologically equivalent, they should be conformally
equivalent.

\noindent Continuing with the cyclic case  for a group of prime order, a relationship between two topologically equivalent actions for the generating vectors is given by J.\,Gilman in \cite{gilman}.
%The classification of finite group actions, up to topological equivalence, on a surface of low genus is studied by A. Broughton in  \cite{broughton}.
  The case where the group is cyclic,  a relationship between the local structure for the automorphisms with fixed points and the epimorphism associated to the action is given by W. Harvey in \cite{harvey}.

\noindent Later in \cite{gabrub}, G.\,Gonz\'alez--Diez and R.\,Hidalgo give an example of two actions of $\mathbb{Z}/8\mathbb{Z}$ on a family  of compact Riemann surfaces of genus $9$ that are directly topologically, but not conformally, equivalent, except for  finitely many cases.

 \medskip
\noindent
Studying the classification  of actions contributes to the understanding of the properties of the moduli space $\mathcal{M}_{g}$.

\noindent For a compact Riemann surface $S_{0}$ of genus $g$, consider the subgroup $H_{0}\leq \auto(S_{0})$,  the set
$$X(S_{0},H_{0})=\left\{(S,H):\exists t \in \home^{+}(S_{0},S)\, , tH_{0}t^{-1}=H \right\}$$
and the equivalence relation: $(S_{1},H_{1}) \sim (S_{2},H_{2})$ if and only if there is
$\phi \in \isom(S_{1},S_{2})$ so that $\phi H_{1} \phi^{-1}=H_{2}$.

\noindent We denote by $\widetilde{\mathcal{M}}_{g}(H_{0})$ the quotient space defined by the above relation. This turns out to be a normal space.

\noindent Consider $\mathcal{M}_{g}$ the moduli space associated to $S_{0}$, that is, a model of moduli space of genus $g$.

\noindent Let $\mathcal{M}_{g}(H_{0})=\{[S]\in \mathcal{M}_{g}\, : \exists t \in \home^{+}(S_{0},S)\, , tH_{0}t^{-1}<\auto(S)\}$.

\noindent The forgetful map is defined by
\begin{eqnarray*}
\mathcal{P}:\widetilde{\mathcal{M}}_{g}(H_{0}) &\longrightarrow& \mathcal{M}_{g}(H_{0}) \\
\text{ } [(S,H)] &\rightsquigarrow& [S]
\end{eqnarray*}
As is well known,
$\widetilde{\mathcal{M}}_{g}(H_{0})$ is the normalization of $\mathcal{M}_{g}(H_{0})$.
Moreover, $\mathcal{P}$ is not bijective if only if there exists a compact Riemann surface $S$ of genus $g$ admitting two groups of automorphisms $H_{1}$ and $H_{2}$ which are directly topologically, but not conformally, conjugate to $H_{0}$.
For further details, see \cite{gabhar}.

\medskip
\noindent Section 2 contains an overview of definitions and relevant results about automorphisms of Riemann surfaces and Fuchsian groups.

\medskip
\noindent  Section 3  contains some our contribution to the problem of the classification of actions. For cyclic groups,
Theorem~\ref{topocyc} gives a condition on the generating vectors under which two actions are directly topologically equivalent.
Also, we generalize a result due to Harvey \cite[Theorem~7]{harvey}.

\medskip
\noindent Section 4,  inspired by the paper of G.\,Gonz\'alez--Diez and R.\,Hidalgo~\cite{gabrub}, we produce for each $n \in \mathbb{N}$  the families $\mathfrak{S}_{1}$ and $\mathfrak{S}_{2}$. By definition, $\mathfrak{S}_{i}$ with $i=1,2\,, \,$ consists of the Riemann surfaces of genus $3(2^{n}-1)$ defined by
\begin{equation*}
    f_{a,\lambda}\left(x,y\right)=y^{2^{n}}-x^{a}\left(x^{2}-1\right)^{a}\left(x^{2}-\lambda^{2}\right)\left(x^{2}-\lambda^{-2}\right)
    \end{equation*}
When $i=1$ (resp. $i=2$), the automorphism group for the elements of $\mathfrak{S}_{1}\,$ (resp. $\mathfrak{S}_{2}$) is $\, \mathbb{Z}/2^{n+1}\mathbb{Z} \times \mathbb{Z}/2\mathbb{Z}\,$ (resp. $\mathbb{Z}/2^{n+1}\mathbb{Z} \rtimes_{h} \mathbb{Z}/2\mathbb{Z}$).
In both cases,  there exist two cyclic subgroups which define directly topologically, but not conformally,  equivalent actions.

\section{Preliminaries}
%A \textit{Riemann surface} \index{Riemann surface} is a connected Hausdorff topological space $S$ endowed with a complex structure. We say that $S$ has a \textit{complex structure} if for all $p \in S$, there exist $U_{p}$ an open neighborhood of $p$ and a homeomorphism $\phi_{p}:U_{p} \longrightarrow V \subset \mathbb{C}\, $ (the pair  $(U_{p},\phi_{p})$ is called \textit{chart on} $S$)
 %   and if for any two charts $(U_{1},\phi_{1})$ and $(U_{2},\phi_{2})$ such that  $U_{1}\cap U_{2} \neq \emptyset$,  we have  $T=\phi_{1} \circ \phi_{2}^{-1}$ is a holomorphic function.

%\noindent When $p\in U$ is such that $\phi(p)=0$ we say $(U,\phi)$ is a \textit{chart centered at }$p$.

\medskip
\noindent We say $\sigma$ is a \textit{holomorphic map of Riemann Surfaces}  from $S_{1}$ to $S_{2}$  if, for each $P\in S_{1}$, $(U,\phi)$ chart centered at $P$ and  $(V, \psi)$  chart centered at $Q=\sigma(Q)\in S_{2}$, then we have $\psi \circ \sigma \circ \phi^{-1}$ is a holomorphic function. The order at zero for this holomorphic function it is called the \textit{multiplicity  } at $P$.
\noindent When the multiplicity at $P$ is greater than or equal $2$ we say $P$ is a \textit{ramification point of} $\sigma$. The point $Q=\sigma(P)$ is called \textit{branch point of} $\sigma$.

\medskip
\noindent For a bijective holomorphic map, $\sigma:S_{1} \longrightarrow S_{2}$,  we say $\sigma$ is a \textit{bi-holomorphic map} or a \textit{isomorphism} between Riemann surfaces.  When $S_{1}=S_{2}$ we say $\sigma$ is an \textit{automorphism of} $S$.
From now on, $\isom(S_{1},S_{2})$ (respectively $\auto(S)$) denotes the isomorphisms set between $S_{1}$ and $S_{2}$ (respectively automorphism of $S$).

\noindent Let $S$ be  a  compact Riemann surface of genus $g\geq2$. For each $P \in S$, consider the subgroup of $\auto(S)$ given by
    \begin{equation*}
    \auto(S)_{P}=\left\{\sigma \in \auto(S):\sigma(P)=P\right\}\, ,
    \end{equation*}
 called \textit{stabilizer subgroup}.

\noindent Now let $(U,\phi)$ be a chart centered at $P$, and  $\sigma \in \auto(S)_{P}$ then we have
    \begin{equation*}
    \phi \circ \sigma \circ \phi ^{-1}(z)=\sum_{m\geq 1}c_{m}(\sigma)z^{m}
    \end{equation*}
and we define
    \begin{eqnarray*}
    \delta_{P} :  \auto(S)_{P} & \longrightarrow & S^{1}=\{z\in \mathbb{C}:|z|=1\} \\
         \sigma & \rightsquigarrow & c_{1}(\sigma)
    \end{eqnarray*}
\begin{theo} \label{delta}
    The map $\delta_{P}$ is a group monomorphism. Further,  $\auto(S)_{P}$ is a cyclic finite subgroup of $\auto(S)$.
    \end{theo}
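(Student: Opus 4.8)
The plan is to prove the statement in two parts: first that $\delta_P$ is a well-defined group monomorphism, and second that $\auto(S)_P$ is finite and cyclic.

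First I would check that $\delta_P$ is well defined, i.e.\ that the coefficient $c_1(\sigma)$ does not depend on the chosen chart $(U,\phi)$. Since $\dim_{\mathbb{C}} S = 1$, under a holomorphic change of coordinate the linear coefficient transforms by conjugation by a nonzero scalar, so it is left unchanged; thus $c_1(\sigma)$ is intrinsically the derivative of $\sigma$ on the tangent line $T_P S$. Because $\sigma$ is a local biholomorphism we have $c_1(\sigma)\neq 0$, so a priori $\delta_P$ lands in $\mathbb{C}^{\ast}$. For the homomorphism property I would use $\phi\circ(\sigma\tau)\circ\phi^{-1} = (\phi\circ\sigma\circ\phi^{-1})\circ(\phi\circ\tau\circ\phi^{-1})$ and read off the linear term by the chain rule, obtaining $c_1(\sigma\tau)=c_1(\sigma)\,c_1(\tau)$; hence $\delta_P$ is a group homomorphism into $\mathbb{C}^{\ast}$.

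The heart of the argument is injectivity together with the claim that the image lies in $S^1$, and this is where I expect the main difficulty. I would use the finiteness of $\auto(S)$ for $g\ge 2$, so that every $\sigma\in\auto(S)_P$ has finite order, say $n$. Writing $F=\phi\circ\sigma\circ\phi^{-1}$, the averaging (linearisation) trick --- replacing $\phi$ by $h(z)=\frac{1}{n}\sum_{j=0}^{n-1}c_1(\sigma)^{-j}F^{\circ j}(z)$, whose derivative at $0$ is $1$ --- conjugates $F$ to the linear map $z\mapsto c_1(\sigma)\,z$ on a neighbourhood of $0$. Consequently $c_1(\sigma)^{n}=1$, which simultaneously shows that $\delta_P(\sigma)\in S^1$ and that, if $c_1(\sigma)=1$, then $\sigma$ equals the identity near $P$, hence by the identity principle on the connected surface $S$ it is the identity everywhere. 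Thus $\ker\delta_P$ is trivial and $\delta_P$ is a monomorphism.

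An alternative route to injectivity, avoiding the explicit conjugation, is the iteration estimate: if $c_1(\sigma)=1$ and $F(z)=z+c_k z^k+\cdots$ with $c_k\neq 0$ the first higher-order term, then $F^{\circ n}(z)=z+n\,c_k z^k+\cdots$, which cannot equal $z$; since $\sigma$ has finite order this forces $c_k=0$ for all $k$, so $F=\mathrm{id}$ near $P$. Either version makes clear that the genuinely delicate point is ruling out a nontrivial automorphism with trivial linear part at a fixed point, which is exactly where global rigidity (finiteness of the automorphism group for $g\ge 2$, equivalently the local linearisability of a finite-order holomorphic map) must be invoked; the purely local data $c_1=1$ is not enough, as nontrivial germs with unit derivative exist.

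Finally, the monomorphism $\delta_P$ identifies $\auto(S)_P$ with a subgroup of $S^1$. Since $\auto(S)$ is finite, this subgroup is finite, and every finite subgroup of the circle group is cyclic, being generated by the element of least positive argument. Therefore $\auto(S)_P$ is a finite cyclic subgroup of $\auto(S)$, which completes the proof.
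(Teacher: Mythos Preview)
Your proof is correct and follows the standard route. Note, however, that the paper does not supply its own proof of this theorem: it simply cites Miranda's textbook \cite{miranda}. The argument you outline --- chain rule for the homomorphism property, finiteness of $\auto(S)$ for $g\ge 2$ to guarantee finite order, then either linearisation or the iteration estimate $F^{\circ n}(z)=z+n\,c_k z^k+\cdots$ to force triviality of the kernel, and finally the observation that finite subgroups of $S^1$ are cyclic --- is precisely the proof one finds there, so there is no substantive difference to report.
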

The proof of this theorem can be found in \cite{miranda}.

%\noindent In the literature we may find the concept of \textit{Rotation Constant}.
%For $p \in S$, and $\sigma \in \auto(S)_{p}$ we say that the Rotation constant of $\sigma$ at $p$ is
 %   $$\delta_{p}(\sigma^{-1})=\delta_{p}(\sigma)^{-1}\, . $$

\noindent Note that for $\tau \in \auto(S)$ and $\sigma\in \auto(S)_{P}$ we have $\tau \circ \sigma \circ \tau^{-1} \auto(S)_{q}$ where $Q=\tau(P)$. It is not difficult prove that $\delta_{Q}(\tau \circ \sigma \circ \tau^{-1})=\delta_{P}(\sigma)$.

%\noindent The following lemma we give a relation between the local structure for the points in the same orbit, under action of $G$.
 %   \begin{lem}\label{rot1}
  %  Let $\sigma,\tau $ be in $\auto(S)$, and suppose $\sigma \in \auto(S)_{p}$, then $$\delta_{q}(\tau\sigma\tau^{-1})=\delta_{p}(\sigma)\, ,$$
   % where $q=\tau(p)$.
    %\end{lem}

%\noindent \textit{Sketch of proof:} Let $(\psi,V)$ be a chart centered at $q$ and let $(\phi,U)$ be a chart centered at $p$. Then
%$$\psi \circ \tau \circ \sigma \circ \tau^{-1} \circ \psi^{-1}(w)=\psi \circ \tau \circ \phi^{-1} \circ \left(\phi \circ \sigma \circ \phi^{-1} \right)\circ \phi \circ \tau^{-1} \circ \psi^{-1}(w)$$
%Recall $\left(\psi \circ \tau \circ \phi^{-1}\right)'(0)$ is the multiplicative inverse of $\left(\phi \circ \tau^{-1} \circ \psi^{-1}\right)'(0)$.

\subsection{Fuchsian groups.}
Let $\Delta$ denote the unit disk $\{z : |z|<1\}$ and let $\auto(\Delta)$ be the group of M\"obius transformations self--mappings of $\Delta$. A \textit{Fuchsian group} is a discrete subgroup $\Gamma$ of $\auto(\Delta)$.
\noindent Let $PSL(2,\mathbb{C})$ denote the M\"obius transformations group and let $T$ be a M\"obius transformation. If $T\neq 1$ then $T$ has one or two fixed point. If $T$ has one fixed point, it is called \textit{parabolic} transformation. Now let $\Upsilon$ be a subgroup of $PSL(2,\mathbb{C})$. We say that $\Upsilon$ acts \textit{properly discontinuously} at $z_{0} \in \widehat{\mathbb{C}}$ provided that the stabilizer subgroup of $\Upsilon$ at $z_{0}$, $\Upsilon_{z_{0}}$, is finite, and there exists a neighborhood $U$ of $z_{0}$ such that
$$T(U)=U \qquad ,\forall T \in \Upsilon_{z_{0}} \qquad\text{ and } \quad  U \cap T(U)=\emptyset \qquad ,\forall T \in \Upsilon - \Upsilon_{z_{0}} $$
\medskip
\noindent Denote  by $\Omega(\Upsilon)$ the \textit{region of discontinuity}  of $\Upsilon$, that is, the set of point $z_{0}\in \widehat{\mathbb{C}}$ such that $\Upsilon$ acts properly discontinuously at $z_{0}$. The complement of set $\Omega(\Upsilon)$ is denote by $\Lambda(\Upsilon)$ and call the \textit{limit set} of $\Upsilon$.

\noindent A Fuchsian group $\Gamma$ (acting on the unit disk $\Delta$) must satisfies that $\Delta \subset \Omega(\Gamma)$.

\bigskip
\noindent Let $\Gamma$ be a Fuchsian group. Since $\Gamma$ acts on $\Delta$, we may consider the natural projection
$$\pi_{\Gamma}:\Delta \longrightarrow \Delta/\Gamma\, $$

$\mathcal{O}=\Delta/\Gamma$ has a \textit{Riemann orbifold structure}, that is,
        \begin{itemize}
        \item[(i)] an underlying Riemann surface structure $\mathcal{O}$ so that $\pi_{\Gamma}: \Delta \to \mathcal{O}$ is a
        holomorphic map;
        \item[(ii)] a discrete collection of cone points (branch  points of $\pi$); and
        \item[(iii)] at each cone point $p$ a cone order; this being the order of the stabilizer cyclic
        subgroup of any point $q$ so that $\pi(q)=p$.
        \end{itemize}

\noindent If $\Gamma$ is torsion--free such that  $\Lambda(\Gamma)=S^{1}$ then $\Pi_{1}\left(\mathcal{O}\right) \cong \Gamma$, and $\mathcal{O}$ has not cone point.
Furthermore let $\Gamma'$  be a Fuchsian groups such that $\Gamma'$ is torsion--free and $\Lambda(\Gamma')=S^{1}$.
Then $S=\Delta/ \Gamma $  and $S'=\Delta/\Gamma'$ are isomorphic Riemann surfaces  if only if there exists $T \in \auto(\Delta)$ such that $\Gamma'=T \Gamma T^{-1}$.

\noindent If $\Gamma$ is finitely generated, without parabolic transformations and $\Lambda(\Gamma)=S^{1}$, then $\mathcal{O}$ is a
compact Riemann surface of some genus $\gamma$ and there are a finite set of cone points. Furthermore if $\Gamma$ torsion--free then the genus of $\Delta/\Gamma$ is at less $2$.

% \noindent In the case that a Fuchsian group  has not torsion we have the following theorem
% \begin{theo}\label{uni}\label{equi}
% Let $K$ be a  Fuchsian group such that $K$ is torsion--free and  $\Lambda(K)=S^{1}$. Then the quotient space $S=\Delta/K$ is a Riemann surface so that $\Pi_{1}(S) \cong K$. If $K$ is a finitely generated and has no parabolic elements, then $S$ is a compact Riemann surface of genus $g\geq 2$.\\
% Furthermore let  $K'$  be a Fuchsian groups such that $K'$ is torsion--free and $\Lambda(K')=S^{1}$.
% Then $S=\Delta/K$  and $S'=\Delta/K'$ are bi--holomorphic Riemann surfaces  if only if there exists $T \in \auto(\Delta)$ such that $K'=TKT^{-1}$.
%\end{theo}

\noindent If $\Gamma$ is finitely generated, without parabolic transformations and
$\Lambda(\Gamma)=S^{1}$, whose underlying Riemann surface has genus
$\gamma$ and the cone orders are $m_{1},\cdots ,m_{r}$, then we define its \textit{signature}  (for both,
$\Gamma$ and ${\mathcal O}$) as the tuple $(\gamma;m_{1},\cdots,m_{r})$.

\noindent The holomorphic map $\pi_{\Gamma}$ is called a \textit{branched covering}  of type $(\gamma;m_{1},\cdots ,m_{r})$.

\noindent When a Fuchsian group $\Gamma$ has  signature $\left(\gamma;m_{1},\cdots , m_{r}\right)$,
there is a presentation associated for the group $\Gamma$, this is, there exist $a_{1},b_{1}, \cdots, a_{\gamma},b_{\gamma},x_{1},\cdots,x_{r}\in \Gamma$ such that $\Gamma$ has a presentation:
            \begin{equation}\label{presentation}
            \Gamma = \left<a_{1},b_{1}, .. \, , a_{\gamma},b_{\gamma},x_{1},..\,,x_{r}: x_{1}^{m_{1}}=\cdots =x_{r}^{m_{r}}=\prod_{i=1}^{\gamma}[a_{i},b_{i}]\prod_{j=1}^{r}x_{j}=1 \right>
            \end{equation}
where $[a_{i},b_{i}]=a_{i}b_{i}a_{i}^{-1}b_{i}^{-1}$.

\noindent Further, we have that for each $j$,  the subgroup generate for $<x_{j}>$ is a maximal finite cyclic subgroup. Moreover, this subgroup is the $\Gamma-$ stabilizer of a unique point in $\Delta$, and each element of finite order in $\Gamma$ is conjugate to a power of some $x_{j}$.

\noindent When $x_{j}$ is conjugate (in $\auto(\Delta)$) to the rotation $R(z)=\exp\left(\frac{2\pi i}{m_{j}}\right)z$ (respectively $R(z)=\exp\left(-\frac{2\pi i}{m_{j}}\right)z$), say $x_{j}$ is a \textit{positive minimal rotation} (respectively \textit{non--positive minimal rotation}). We say that $x_{j}$ is a \textit{minimal rotation} in any these cases.

\noindent According to the paper of L.Keen \cite{keen} (also see \cite[Theorem 4.3.2]{katok}) we may construct for $\Gamma$ a hyperbolic polygon with $4\gamma+2r$ sides.

%The polygon has $r$ external isosceles hyperbolic triangles such that the angles between the equal sides  of the triangles are $\dfrac{2\pi}{m_{1}}, \cdots, \dfrac{2\pi}{m_{r}}$. Hence $\Gamma$ has a presentation according to the hyperbolic polygon associated as \eqref{presentation}, where for each $j$\,  $x_{j}$ is a positive minimal rotation, in other words $x_{j}$ is a counterclockwise rotation about a some point through angle $\dfrac{2\pi}{m_{j}}$.

\noindent Associated to this hyperbolic polygon we may find a presentation for $\Gamma$ as \eqref{presentation} such that for each $j=1,\,..\,,r$ \, $x_{j}$ is a positive minimal rotation.

\noindent We remark that for every  signature $\left(\gamma;m_{1},m_{2},\cdots , m_{r}\right)$ such  that
            $$2\gamma -2 + \sum\left(1-\dfrac{1}{m_{j}}\right)> 0$$
there exists $\Gamma$ a Fuchsian group, uniquely determined up conjugation in $\auto(\Delta)$, with this signature.

\noindent For more details see \cite{macbeath},  \cite{farkas}, \cite{katok} and \cite{maskit}.

\bigskip

\noindent Consider  two Fuchsian groups $\Gamma_{1}, \Gamma_{2}$ . We  say that $\Gamma_{1}$ is \textit{geometrically isomorphic} to $\Gamma_{2}$ if there exists a self--homeomorphism of $\Delta$, say $T \in \home (\Delta)$, and a group isomorphism $\chi:\Gamma_{1} \rightarrow \Gamma_{2}$ such that for all $x\in \Gamma_{1}$ the following holds
            \begin{equation*}
            \chi(x)=T \circ x \circ T^{-1} \, .
            \end{equation*}

\noindent We  also say that the group isomorphism  $\chi:\Gamma_{1} \rightarrow \Gamma_{2}$  can be \textit{realized geometrically} if there exists $T \in \home (\Delta)$ such that the previous condition is true.

\begin{theo}\label{nec}
Let $\chi:\Gamma_{1}  \longrightarrow \Gamma_{2}$ be an isomorphism between finitely generated Fuchsian group, both without parabolic elements, with $\Lambda(\Gamma_{j})=S^{1}$, for $j=1,2$. Then $\chi$ is geometric.
\end{theo}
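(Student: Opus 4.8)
\section{Proof proposal for Theorem~\ref{nec}}

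The plan is to prove the statement in two stages. First I would show that the abstract isomorphism $\chi$ forces $\Gamma_{1}$ and $\Gamma_{2}$ to share a common signature; this is needed so that the two groups are honestly ``the same kind'' of Fuchsian group. Second, and this is the geometric heart of the matter, I would realize $\chi$ by building a $\chi$--equivariant self--homeomorphism of the boundary circle $S^{1}=\partial\Delta$ and then coning it into $\Delta$.

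For the first stage I would read off the signature purely algebraically. By the structure statements following the presentation \eqref{presentation}, the conjugacy classes of the maximal finite cyclic subgroups of $\Gamma_{j}$ are exactly those of $\langle x_{1}\rangle,\dots,\langle x_{r}\rangle$, and their orders are the cone orders $m_{1},\dots,m_{r}$. Since an isomorphism preserves the orders of elements and carries maximal finite cyclic subgroups to maximal finite cyclic subgroups, $\chi$ induces an order--preserving bijection between these two families of conjugacy classes; hence the multisets $\{m_{1},\dots,m_{r}\}$ coincide. The genus is recovered from the rank of the free part of the abelianization (which equals $2\gamma$, the commutators dying and the $x_{j}$ contributing only torsion), and this rank is also preserved by $\chi$. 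Thus $\Gamma_{1}$ and $\Gamma_{2}$ have the same signature $(\gamma;m_{1},\dots,m_{r})$, and I may fix for both a geometric presentation of the form \eqref{presentation}.

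For the second stage I would pass to the action on $S^{1}$. Because each $\Gamma_{j}$ is finitely generated, without parabolics, and satisfies $\Lambda(\Gamma_{j})=S^{1}$, the quotient orbifold is compact and every element of infinite order is hyperbolic, with an attracting and a repelling fixed point on $S^{1}$. As $\chi$ preserves orders of elements it sends hyperbolic elements to hyperbolic elements, so setting $h(\gamma^{+})=\chi(\gamma)^{+}$ and $h(\gamma^{-})=\chi(\gamma)^{-}$ defines a map on the set $F_{1}$ of fixed points of hyperbolic elements of $\Gamma_{1}$. This is well defined: in a Fuchsian group two hyperbolic elements sharing one fixed point share both and lie in a common maximal cyclic subgroup $\langle g\rangle$, a relation preserved by $\chi$, so the two assignments agree through $\chi(g)$. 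By construction $h$ is $\chi$--equivariant, and since the fixed points of hyperbolic elements are dense in the limit set, $h$ is defined on a dense subset of $S^{1}$ with dense image.

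The main obstacle is to upgrade this equivariant bijection of fixed points to a genuine self--homeomorphism of $S^{1}$ (i.e.\ to prove it is continuous and respects the circular order) and then to extend it over $\Delta$ so as to conjugate the \emph{actions}, not merely the boundary dynamics. I would obtain the boundary homeomorphism through the quasi--isometric rigidity of the hyperbolic plane: by the \v{S}varc--Milnor lemma each $\Gamma_{j}$ with a word metric is quasi--isometric to $\Delta$, while the isomorphism $\chi$ is a bi--Lipschitz bijection of word metrics, hence a quasi--isometry; composing yields a quasi--isometry $\Delta\to\Delta$ which, $\Delta$ being Gromov hyperbolic, extends continuously to a homeomorphism of $\partial\Delta=S^{1}$ agreeing with $h$ on hyperbolic fixed points and $\chi$--equivariant. (Alternatively the circular monotonicity can be argued directly from the North--South dynamics of hyperbolic elements and the density of their axes, in the spirit of the classical Dehn--Nielsen--Baer argument.) Finally I would extend this boundary homeomorphism to $T\in\home(\Delta)$ by the barycentric (Douady--Earle) extension, which is natural under M\"obius pre-- and post--composition: since $\gamma$ and $\chi(\gamma)$ are M\"obius and $h$ conjugates them on $S^{1}$, naturality gives $T\circ x\circ T^{-1}=\chi(x)$ for all $x\in\Gamma_{1}$ on all of $\Delta$. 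This exhibits $\chi$ as geometric, noting that no orientation hypothesis is imposed, so an orientation--reversing $\chi$ simply produces an orientation--reversing $T$.
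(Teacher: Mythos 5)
Your proposal is essentially correct, but you should be aware that the paper contains no internal proof of Theorem~\ref{nec} to compare against: the result is quoted from Macbeath \cite{macbeath}, where it is established at the level of NEC groups by the classical Nielsen-type method --- one first checks, much as in your first stage, that the signature is an algebraic invariant of the group, and then constructs the conjugating homeomorphism combinatorially, by matching fundamental polygons and lifting a homeomorphism between the quotient orbifolds. Your second stage replaces that combinatorial construction with modern geometric group theory, and the replacement is legitimate: the hypotheses (finitely generated, no parabolics, $\Lambda(\Gamma_{j})=S^{1}$) force each action on $\Delta$ to be cocompact --- a point you should make explicit, since \v{S}varc--Milnor requires it and it is exactly where all three hypotheses enter --- so the orbit map gives a $\chi$-equivariant quasi-isometry of $\Delta$, its boundary extension is a $\chi$-equivariant homeomorphism of $S^{1}$, and Douady--Earle naturality $E(A\circ h\circ B)=A\circ E(h)\circ B$ then yields $T=E(h)\in\home(\Delta)$ with $T\circ x\circ T^{-1}=\chi(x)$ on all of $\Delta$. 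Three remarks. First, in your route Stage 1 is superfluous: the quasi-isometry argument never uses equality of signatures, which comes out as a corollary of the theorem rather than serving as an ingredient (it is, by contrast, genuinely needed in Macbeath's approach). Second, your hand-built map $h$ on hyperbolic fixed points is likewise redundant once the quasi-isometric boundary map exists, since the latter automatically sends $\lim_{n}x^{n}p$ to $\lim_{n}\chi(x)^{n}p'$; the delicate circular-order argument you defer is precisely what the Morse lemma machinery discharges. Third, the Douady--Earle extension and its naturality are standardly stated for orientation-preserving circle homeomorphisms and conformal $A,B$; when $h$ reverses orientation you should extend $(z\mapsto\bar{z})\circ h$ and conjugate back by $z\mapsto\bar{z}$, which is harmless here because the elements $x$ and $\chi(x)$ being conjugated are conformal M\"obius transformations --- this matches the paper's remark that the realizing homeomorphism cannot in general be taken orientation-preserving (the signature $(0;5,5,5)$ example with $\chi(x_{j})=x_{j}^{-1}$). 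As for what each approach buys: Macbeath's is elementary, stays within hyperbolic plane combinatorics, and applies verbatim to NEC groups with reflections, which is the generality the paper invokes; yours is shorter, avoids polygon bookkeeping, and is the template for Mostow-type rigidity in greater generality.
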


\noindent The preceding theorem holds at the level of Non Euclidean plane Crystallographic groups (NEC groups), that is, finitely generated discrete subgroup of isometries of the hyperbolic disc containing no parabolic elements.
\noindent For more details see \cite[pag. 1201]{macbeath}.

\noindent At this point it is important to note that two homeomorphism, say $F_{1},F_{2}:\Delta \longrightarrow \Delta$, defining the same isomorphism $\chi:\Gamma_{1} \longrightarrow \Gamma_{2}$, must have the same orientability  type. In fact, the homeomorphism $F_{2}^{-1}\circ F_{1}:\Delta \longrightarrow \Delta$ defines the identity automorphism of $\Gamma_{1}$. It can be proved that, in this case, $F_{2}^{-1}\circ F_{1}$ is homotopic to the identity.

\noindent We remark that in general  we may not assume the homeomorphism that realizes the isomorphism should be orientation preserving. An example of this situation is:   let $\Gamma$ be a Fuchsian group with signature $(0; 5,5,5)$ the isomorphism $\chi:\Gamma \rightarrow \Gamma$ given by $\chi(x_{j})=x_{j}^{-1}$ (for $j=1,2$ and notations as \eqref{presentation}).

\bigskip

\bigskip

\subsection{Actions on Riemann surfaces.}
%We say that a group $G$ \textit{acts on a Riemann surface} $S$, if there exists a monomorphism
%$\varepsilon:G \longrightarrow \auto(S)$. We call to the monomorphism  $\varepsilon$  an \textit{action} of $G$ on $S$. Recall $G$ is a finite group (the genus of $S$ is at less $2$).

Let $\varepsilon:G\rightarrow \auto(S)$ be an action of $G$ on $S$, where $S$ is a Riemann surfaces of genus at less $2$ and $G$ is a finite group.

\noindent We may consider for each $P\in S$ the \textit{stabilizer subgroup}  of $P$, this is
$$G_{P}=\left\{D \in G : \varepsilon(D)(P)=P \right\}\, .$$
Since $\varepsilon(G_{p})\leq \auto(S)_{P}$ it follow of the theorem \eqref{delta} that  $G_{P}$ is a cyclic group.

\noindent For the  action $\varepsilon:G \rightarrow \auto(S)$ we have the natural projection $\pi: S \longrightarrow S/ \varepsilon(G)$.
Using this projection we can give to $S/ \varepsilon(G)$ an  complex structure. Hence $S/ \varepsilon(G)$ is a compact Riemann surfaces and $\pi$ is a holomorphic map of Riemann surfaces.

\noindent Further we have that $P\in S$ is a ramification point of $\pi$ if only if $G_{P}\neq \{Id\}$, furthermore the multiplicity of $P$ is $|G_{P}|$. Then $\pi$ is a  smooth covering (unbranched covering) on the complement of a finite set, the ramification points set.

\noindent We called to $\pi$  a \textit{branched covering}, and we say that $\pi$ has a \textit{signature} $(\gamma; m_{1},\cdots ,m_{r})\, ,$
where $\gamma$ is the genus of $S/ \varepsilon(G)$\, , $m_{j}$ are the multiplicity of the ramification points, and $r$ is the number of the branch points of $\pi$. Sometimes also we will say $G$ acts on $S$ with signature $(\gamma; m_{1},\cdots ,m_{r})$. For this notation we suppose $m_{1}\geq \cdots \geq m_{r}\, .$

\noindent The following theorem give a relation between the action on Riemann surfaces theory and the Fuchsian group theory:
\begin{theo}\label{existence2} \label{existence}
    Let $S$ be a compact Riemann surface of genus $ g \geq 2$ and let $G$ be a finite group.
    There is an action $\varepsilon$  of $G$ on $S$  with signature $(\gamma;m_{1},\cdots,m_{r})$ if and only if there
    are a Fuchsian group $\Gamma$ with signature $( \gamma;m_{1},\cdots,m_{r})$ , an epimorphism $\theta_{\varepsilon}:\Gamma \rightarrow G$ such that $K=\ker(\theta_{\varepsilon})$ is torsion--free Fuchsian group, $\Lambda(K)=S^{1}$ and $\Delta/K$
    %,( the quotient space induced by the action of $K$ on $\Delta$,)
    is a Riemann surface isomorphism to $S$.
    \end{theo}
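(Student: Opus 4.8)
The plan is to prove both implications by passing to the universal cover $\Delta$ and using the uniformization facts recalled above, so that the group action on $S$ and the epimorphism of Fuchsian groups become two faces of the same normal-subgroup relation $K \triangleleft \Gamma$ with $\Gamma/K \cong G$. For the forward implication, suppose $\varepsilon: G \to \auto(S)$ is an action with signature $(\gamma; m_{1}, \ldots, m_{r})$. Since $g \geq 2$, uniformization provides a torsion--free Fuchsian group $K$ with $\Lambda(K) = S^{1}$ and a biholomorphism $\Delta/K \cong S$; thus $\pi_{K}: \Delta \to S$ is the universal cover and $K \cong \Pi_{1}(S)$ is its deck group. Each automorphism $\varepsilon(D)$, $D \in G$, lifts through $\pi_{K}$ to a self--biholomorphism of $\Delta$, that is, to an element of $\auto(\Delta)$, and I would let $\Gamma$ be the set of all such lifts over all $D \in G$. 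I would then check that $\Gamma$ is a group containing $K$ (the lifts of the identity) as a normal subgroup, that sending a lift of $\varepsilon(D)$ to $D$ induces a well--defined epimorphism $\theta_{\varepsilon}: \Gamma \to G$ with kernel exactly $K$, and hence $[\Gamma : K] = |G| < \infty$. Since $K$ is discrete of finite index in $\Gamma$, the group $\Gamma$ is itself discrete, i.e. Fuchsian, with $\Lambda(\Gamma) = \Lambda(K) = S^{1}$.

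Next I would identify the quotient orbifold. Because $K \triangleleft \Gamma$, one has $\Delta/\Gamma \cong (\Delta/K)/(\Gamma/K) \cong S/\varepsilon(G)$, the projection $\pi_{\Gamma}$ factoring as $\Delta \to S \to S/\varepsilon(G)$; this identifies the underlying Riemann surface of $\Delta/\Gamma$ with $S/\varepsilon(G)$, of genus $\gamma$. It then remains to match the cone data: a cone point of $\Delta/\Gamma$ of order $m$ is a branch point of $\pi_{\Gamma}$ and, via the factorization, a branch point of $\pi: S \to S/\varepsilon(G)$, its cone order being the order of the $\Gamma$--stabilizer of a preimage in $\Delta$, which under $\Gamma/K \cong G$ equals the order of the (cyclic, by Theorem~\ref{delta}) stabilizer $G_{P}$, namely the multiplicity $m_{j}$. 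Hence $\Gamma$ has signature $(\gamma; m_{1}, \ldots, m_{r})$ and $K = \ker\theta_{\varepsilon}$ has all the required properties.

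For the converse, suppose $\Gamma$ is Fuchsian of signature $(\gamma; m_{1}, \ldots, m_{r})$ with epimorphism $\theta: \Gamma \to G$, $K = \ker\theta$ torsion--free, $\Lambda(K) = S^{1}$, and $\Delta/K \cong S$. Since $K \triangleleft \Gamma$ and every element of $\Gamma \subset \auto(\Delta)$ is conformal, the quotient $\Gamma/K \cong G$ acts on $S = \Delta/K$ by the rule $(\gamma K)(Kz) := K(\gamma z)$; I would verify that this is well defined (using normality of $K$) and yields conformal automorphisms, producing a homomorphism $\varepsilon: G \cong \Gamma/K \to \auto(S)$. The essential point is faithfulness: if $\gamma K$ acts as the identity on $\Delta/K$, then $\gamma z \in Kz$ for every $z$, so evaluating at a point $z$ whose $\Gamma$--stabilizer is trivial — such points are dense, their complement being the discrete ramification locus — forces $\gamma \in K$. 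Thus $\varepsilon$ is a monomorphism, i.e. a genuine action, and as above $\Delta/\Gamma \cong S/\varepsilon(G)$, so the signature of $\varepsilon$ coincides with that of $\Gamma$.

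I expect the main obstacle to lie in the forward direction, specifically in checking that the lifts of $\varepsilon(G)$ assemble into a discrete group $\Gamma$ normalizing $K$ and, above all, that the cone orders of $\Delta/\Gamma$ are exactly the stabilizer orders $|G_{P}| = m_{j}$, rather than proper divisors or multiples. This is where Theorem~\ref{delta} (stabilizers cyclic, with a well--defined local rotation number) together with the properly discontinuous action of $\Gamma$ does the real work; by comparison the converse is largely formal, once well--definedness and faithfulness of the induced $\Gamma/K$--action have been confirmed.
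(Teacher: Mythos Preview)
The paper does not actually prove this theorem: it is stated in the preliminaries as a known result, and is followed only by a short paragraph fixing notation --- the isomorphism $f:\Delta/K\to S$ and the formula $\theta_{\varepsilon}(x)=f\circ\widetilde{x}\circ f^{-1}$ for the epimorphism --- with the understanding that the proof is standard (references such as \cite{farkas}, \cite{macbeath}, \cite{broughton} are cited for the surrounding material). Your sketch is correct and is precisely the classical argument: uniformize $S$ by a torsion--free $K$, take $\Gamma$ to be the full lift of $\varepsilon(G)$ to $\auto(\Delta)$, and read off the signature from the identification $\Delta/\Gamma\cong S/\varepsilon(G)$; conversely, let $\Gamma/K$ act on $\Delta/K$ and check faithfulness on the generic (unramified) locus. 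The construction of $\theta_{\varepsilon}$ that the paper records after the statement is exactly your map ``send a lift of $\varepsilon(D)$ to $D$'', written from the other side, so there is no discrepancy in approach --- only that the paper omits the verification you supply.
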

    We have $\Delta/K$ is a  Riemann surface isomorphic to $S$. Call $f: \Delta/K \rightarrow S$ to this isomorphism and $\pi_{K}:\Delta \rightarrow \Delta/K$ to the natural projection. If $X$ is the universal covering for $S$, then we may lift $f$ to $F:\Delta \rightarrow X$ isomorphism of Riemann surfaces.

    \noindent For $x\in \Gamma$ we define $\theta_{\varepsilon}(x)=f\circ \widetilde{x} \circ f^{-1}$ where $\widetilde{x}$ is the automorphism of $\Delta/K$ induced by $x$ (i.e. $\pi_{K} \circ x = \widetilde{x}\circ \pi_{K}$).

    %\[\begin{diagram}
%            \node{X} \arrow{e,t}{F^{-1}} \arrow{s,l}{} \node{\Delta} \arrow{s,l}{\pi_{K}} \arrow{e,t}{x} \node{\Delta} \arrow{s,l}{\pi_{K} } %     \arrow{e,t}{F} \node{X} \arrow{s,l}{}\\
 %           \node{S} \arrow{e,t}{f^{-1}} \arrow{s,l}{} \node{\Delta/K} \arrow{s,l}{} \arrow{e,t}{\widetilde{x}} \node{\Delta/K} \arrow{s,l}{} %     \arrow{e,t}{f} \node{S} \arrow{s,l}{} \\
      %      \node{S/\varepsilon(G)} \arrow{e,t}{} \node{\Delta/\Gamma}  \arrow{e,t}{\id} \node{\Delta/\Gamma} \arrow{e,t}{} \node{S/\varepsilon(G)}
     %       \end{diagram}\]
  %  Therefore
   %     \begin{equation*}
    %    \theta_{\varepsilon}(x) = f \circ \widetilde{x} \circ f^{-1}
     %   \end{equation*}

\section{Equivalence of actions}

\noindent Let $S_{j}$ be a Riemann Surface for $j=1,2$, and $G$ be a group.
\noindent The actions  $\varepsilon_{1}$, $\varepsilon_{2}$ of $G$ on $S_{1}$ and $S_{2}$ respectively,  are called \textit{topologically equivalent} if there exist $\Phi \in \auto{(G)}$ and $t \in \home{(S_{1}, S_{2})}$ such that the following diagram is commutative
             \[\begin{diagram}
            \node{G}\arrow{e,t}{\varepsilon_{1}} \arrow{s,e}{\Phi} \node{\varepsilon_{1}(G)} \arrow{s,r}{\Psi_{t}} \\
            \node{G} \arrow{e,t}{\varepsilon_{2}} \node{\varepsilon_{2}(G)}
            \end{diagram}\]
where $\Psi_{t}(\tau):= t\, \circ \, \tau \, \circ t^{-1}$ and  $\home{(S_{1} , S_{2})}$ is the group of homeomorphisms of $S_{1}$ on $S_{2}$.

\noindent If $t\in \home^{+}(S_{1}, S_{2})$ then we say that $\varepsilon_{1}$, $\varepsilon_{2}$ are \textit{directly topologically equivalent}, where $\home^{+}{(S_{1}, S_{2})}$ is the group of orientation preserving homeomorphisms of $S_{1}$ on $S_{2}$. Further if
$t\in \isom(S_{1},S_{2})$ then we say that $\varepsilon_{1}$, $\varepsilon_{2}$ are \textit{conformally equivalent}.

\bigskip

\noindent If  $\varepsilon_{j}$ is an action of $G$ on $S_{j}$, then by the theorem \ref{existence2} there exist  $\Gamma_{j}$ and $K_{j}$ Fuchsian groups and an epimorphism $\theta_{\varepsilon_{j}}=\theta_{j}:\Gamma_{j} \rightarrow G$ such that $K_{j}=\ker(\theta_{j})$. The following theorem we give a relation between $\theta_{1}$ and $\theta_{2}$ when the actions $\varepsilon_{1}$ and $\varepsilon_{2}$ are equivalents of some type.

\begin{theo}\label{topo}\label{conf}
    \noindent $\varepsilon_{1}$ is topologically equivalent to $\varepsilon_{2}$ (respectively directly topologically equivalent or conformally equivalent) if only if there exists $T \in \home(\Delta)$ (respectively $T \in \home^{+}(\Delta)$ or $T\in \auto(\Delta)$) and a group isomorphism $\Phi:\varepsilon_{1}(G) \rightarrow \varepsilon_{2}(G)$ such that $\Phi \, \circ \, \theta_{1} = \theta_{2} \, \circ \, \chi_{T}$
    where $\chi_{T}(x)=T\, \circ\, x \, \circ \, T^{-1}$.

    %\noindent For $T\in \home^{+}(\Delta)$ we have $\varepsilon_{1}$ is directly topologically equivalent to $\varepsilon_{2}$.
    %\noindent Furthermore  for $T\in \auto(\Delta)$ we have $\varepsilon_{1}$ is conformally equivalent to $\varepsilon_{2}$
   \end{theo}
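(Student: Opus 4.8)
The plan is to prove both directions of the equivalence by translating the topological/conformal data on the surfaces into the language of the universal cover $\Delta$ and the Fuchsian groups $\Gamma_j$, $K_j$. The key structural fact, coming from Theorem~\ref{existence2}, is that each action $\varepsilon_j$ gives a commuting diagram relating $\pi_{K_j}:\Delta\to\Delta/K_j$, the isomorphism $f_j:\Delta/K_j\to S_j$, and the epimorphism $\theta_j:\Gamma_j\to G$, where an element $x\in\Gamma_j$ descends to the automorphism $\theta_j(x)=f_j\circ\widetilde{x}\circ f_j^{-1}$ of $S_j$. The whole proof rests on lifting the homeomorphism $t:S_1\to S_2$ (of the appropriate type) to a homeomorphism $T:\Delta\to\Delta$ of the same orientability type and checking that this $T$ conjugates $\Gamma_1$ to $\Gamma_2$ and induces the required isomorphism $\Phi$.

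First I would prove the forward direction. Suppose $\varepsilon_1$ and $\varepsilon_2$ are topologically equivalent, so there is $\Phi_0\in\auto(G)$ and $t\in\home(S_1,S_2)$ with $\Psi_t\circ\varepsilon_1=\varepsilon_2\circ\Phi_0$. Compose $t$ with the isomorphisms $f_1,f_2$ to obtain a homeomorphism $\hat{t}=f_2^{-1}\circ t\circ f_1:\Delta/K_1\to\Delta/K_2$, and lift it through the covering projections $\pi_{K_1},\pi_{K_2}$ to a homeomorphism $T:\Delta\to\Delta$. Because the $K_j$ are normal in $\Gamma_j$ with quotient $G$ (via $\theta_j$), this lift $T$ conjugates $K_1$ onto $K_2$, and the conjugation action $\chi_T$ descends to an isomorphism $\Gamma_1\to\Gamma_2$. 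One then verifies, by tracing the commuting square through the covering maps and using $\theta_j(x)=f_j\circ\widetilde{x}\circ f_j^{-1}$, that the induced isomorphism $\Phi:\varepsilon_1(G)\to\varepsilon_2(G)$ satisfies $\Phi\circ\theta_1=\theta_2\circ\chi_T$. The orientability bookkeeping is exactly the point where the three cases separate: if $t$ is orientation preserving then so is the lift $T$, and if $t$ is conformal then $\hat t$ and hence $T$ can be taken holomorphic, i.e.\ $T\in\auto(\Delta)$.

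For the converse I would reverse this construction. Given $T\in\home(\Delta)$ and an isomorphism $\Phi$ with $\Phi\circ\theta_1=\theta_2\circ\chi_T$, the relation $\theta_2(\chi_T(x))=\Phi(\theta_1(x))$ forces $T$ to carry $K_1=\ker\theta_1$ into $K_2=\ker\theta_2$; since $\Phi$ is an isomorphism the same argument applied to $T^{-1}$ gives $TK_1T^{-1}=K_2$. Hence $T$ descends to a homeomorphism $\bar T:\Delta/K_1\to\Delta/K_2$, and setting $t=f_2\circ\bar T\circ f_1^{-1}$ produces a homeomorphism $S_1\to S_2$. The compatibility relation $\Phi\circ\theta_1=\theta_2\circ\chi_T$ then translates precisely into the commutativity of the defining diagram for topological equivalence, with $\Phi$ inducing the needed $\auto(G)$ map after pulling back through $\varepsilon_1,\varepsilon_2$; the orientation and conformal refinements again follow from the type of $T$.

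The main obstacle I expect is the lifting step and the attendant orientation control: one must check that the lift $T$ of $\hat t$ through the branched-quotient picture is well defined as a homeomorphism of $\Delta$ and that it normalizes the Fuchsian groups correctly, which uses that $K_j$ is torsion-free with $\Lambda(K_j)=S^1$ so that $\Delta\to S_j$ is the genuine universal cover and $\pi_1(S_j)\cong K_j$. The conformal case additionally requires invoking that a homeomorphism realizing a Fuchsian-group isomorphism of the type in Theorem~\ref{nec} is geometric, and that when $t$ is holomorphic the lift can be chosen in $\auto(\Delta)$ rather than merely in $\home^+(\Delta)$; keeping the induced map $\Phi$ consistent across the identification $\varepsilon_j(G)\cong G$ is the bookkeeping that ties all three equivalences together.
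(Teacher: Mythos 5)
Your proposal is correct and follows essentially the same route as the paper: lift $f_{2}^{-1}\circ t\circ f_{1}$ to $T$ on $\Delta$ (using that $K_{j}$ is torsion--free so $\pi_{K_{j}}$ is an honest covering), chase the diagram to get $\Phi\circ\theta_{1}=\theta_{2}\circ\chi_{T}$, and conversely use $\chi_{T}(K_{1})=K_{2}$ to descend $T$ to the required $t$ with $\Phi=\Psi_{t}$, with the three cases tracked by the type of the lift. Your added remarks on the orientation bookkeeping and on why the lift exists are consistent with (and slightly more explicit than) the paper's argument; no substantive difference.
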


\begin{proof}
For each $j$, call $f_{j}: \Delta/K_{j} \rightarrow S_{j}$ to the isomorphism between $\Delta/K_{j}$ and $S_{j}$ given by the theorem \ref{existence2}. If $\varepsilon_{1}$ is topologically equivalent to $\varepsilon_{2}$ (respectively directly topologically equivalent or conformally equivalent) then there exists $t\in \home(S_{1},S_{2})$ (respectively $t\in \home^{+}(S_{1},S_{2})$ or $t\in \isom(S_{1},S_{2})$). Now we may lift $f_{2}^{-1}\circ t \circ f_{1}$ to $T \in \home(\Delta)$ (respectively $T \in \home^{+}(\Delta)$ or $T \in \auto(\Delta)$). For $x\in \Gamma_{1}$, according to the notation of the theorem \ref{existence2},  we have the following diagram:
\[\begin{diagram}
            \node{\Delta} \arrow{e,t}{T^{-1}} \arrow{s,l}{} \node{\Delta} \arrow{e,t}{ x} \arrow{s,l}{} \node{\Delta} \arrow{s,l}{ } \arrow{e,t}{T} \node{\Delta}  \arrow{s,l}{}\\
            \node{\Delta/K_{2}} \arrow{e,t}{f_{1}^{-1}t^{-1}f_{2}} \node{\Delta/K_{1}} \arrow{e,t}{f_{1}^{-1}\theta_{1}(x)f_{1}} \node{\Delta/K_{1}} \arrow{e,t}{f_{2}^{-1}tf_{1}} \node{\Delta/K_{2}}
            \end{diagram}\]

\noindent Note that $T \,\circ \, x \, \circ \, T^{-1} \in \Gamma_{2}$. Furthermore if  $x\in K_{1}$ then $T \,\circ \, x \, \circ \, T^{-1} \in K_{2}$.

\noindent Using the diagram we have $\theta_{2}(T\circ x \circ T^{-1})=\Psi_{t}(\theta_{1}(x))$.
%\begin{eqnarray*}
%    \theta_{2}(T\circ x \circ T^{-1})&=&f_{2}\circ (f_{2}^{-1}tf_{1} \circ f_{1}^{-1}\theta_{1}(x)f_{1} \circ f_{1}^{-1}t^{-1}f_{2})\circ f_{2}^{-1} \\
 %   &=& t \circ \theta_{1}(x) \circ t^{-1} \\
 %   &=& \Psi_{t}(\theta_{1}(x))
  %  \end{eqnarray*}

\bigskip

\noindent Reciprocally we have $\Phi \, \circ \, \theta_{1} = \theta_{2} \, \circ \, \chi_{T}$ then $\chi_{T}(K_{1})=K_{2}$. Therefore $T \in \home(\Delta)$ (respectively $T\in \home^{+}(\Delta)$ or $T\in \auto(\Delta)$ )  define  $t\in \home(S_{1},S_{2})$ (respectively $t\in  \home^{+}(S_{1},S_{2})$ or $t\in \isom(S_{1},S_{2})$) such that $\Phi=\Psi_{t}$ and the actions are topologically equivalents (respectively directly topologically equivalent or conformally equivalent).
\end{proof}

\noindent Remark that for $S_{1}=S_{2}=S$ the actions $\varepsilon_{1}$ and $\varepsilon_{2}$  are topologically equivalents (respectively directly topologically equivalent or conformally equivalent ) if only if $\varepsilon_{1}(G)$ and $\varepsilon_{2}(G)$ are conjugate group in $\home(S)$ (respectively in $\home^{+}(S)$ or $\auto(S)$).

\subsection{Cyclic groups}
 Let $G=\mathbb{Z}/n\mathbb{Z}$ be a cyclic group of order $n$. We consider $G$ as the integers module $n$ ($G=\{0,1,\cdots,n-1\}\,$) and $\Gamma$ a Fuchsian group with signature $(\gamma;m_{1},\cdots , m_{r})$ and presentation as \eqref{presentation}.

\noindent The following lemma was given by Kuribayashi (see \cite[Lemma 3.1]{kuribayashi}).

    \begin{lem}
    Let $\theta:\Gamma \longrightarrow G$ be a group epimorphism and assume that $\nu$ is a generator of $G$. Then for any permutation $\mu$ of $\{1,\cdots,r\}$ with $m_{\mu(j)}=m_{j}\,$ ($j=1,\cdots,r$), there exists an automorphism $\chi$ of $\Gamma$ such that
        \begin{itemize}
        \item[(i)] $\theta \circ \chi(a_{i})=\theta \circ \chi(b_{i})=\nu$\, \qquad  with $i=1,\cdots,\gamma$;
        \item[(ii)]$\chi(x_{j})=D_{j}x_{\mu(j)}D_{j}^{-1}$, \qquad for some $D_{j}\in \Gamma$\, with $j=1,\cdots,r.$
        \end{itemize}
    \end{lem}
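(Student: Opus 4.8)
The plan is to realize $\chi$ as a composition of finitely many ``elementary'' signature--preserving automorphisms of $\Gamma$, each induced by a mapping class of the orbifold $\Delta/\Gamma$, while tracking only their effect on the images under $\theta$. Identify $G=\mathbb{Z}/n\mathbb{Z}$ with $\nu=1$ and write $\alpha_{i}=\theta(a_{i})$, $\beta_{i}=\theta(b_{i})$, $\xi_{j}=\theta(x_{j})$. Since $G$ is abelian the commutators die under $\theta$, so the relations of \eqref{presentation} become $m_{j}\xi_{j}\equiv 0 \pmod n$ and $\sum_{j}\xi_{j}\equiv 0\pmod n$, while surjectivity of $\theta$ says precisely that $\gcd(\alpha_{1},\beta_{1},\dots,\alpha_{\gamma},\beta_{\gamma},\xi_{1},\dots,\xi_{r},n)=1$. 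The target is a $\chi$ whose image vector has every handle coordinate equal to $1$ and whose elliptic coordinates are merely permuted by $\mu$.

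First I would introduce three families of automorphisms, each of which preserves the presentation \eqref{presentation} (hence is a genuine automorphism of $\Gamma$), and record its action on the image vector. (a) The elliptic braid moves $x_{j}\mapsto x_{j}x_{j+1}x_{j}^{-1}$, $x_{j+1}\mapsto x_{j}$ (others fixed), which require $m_{j}=m_{j+1}$, fix all $a_{i},b_{i}$, transpose $\xi_{j}\leftrightarrow\xi_{j+1}$, and send each moved generator to a $\Gamma$--conjugate of the other; composing these realizes the prescribed $\mu$ in the form $x_{j}\mapsto D_{j}x_{\mu(j)}D_{j}^{-1}$, which is exactly (ii). (b) The handle (Nielsen) moves supported on the $a_{i},b_{i}$, which fix every $x_{j}$ and induce on $(\alpha_{1},\beta_{1},\dots,\alpha_{\gamma},\beta_{\gamma})$ the action of $Sp(2\gamma,\mathbb{Z})$, surjecting onto $Sp(2\gamma,\mathbb{Z}/n\mathbb{Z})$. (c) The point--pushing (transfer) moves, which push a cone point past a handle loop: these replace $x_{j}$ by a $\Gamma$--conjugate of itself, so $\theta(x_{j})$ is unchanged, while adding $\pm\xi_{j}$ to one handle coordinate $\alpha_{i}$ or $\beta_{i}$.

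With this toolbox, (i) follows by a short descent. If $\gamma=0$ there are no handles and (i) is vacuous, so assume $\gamma\ge 1$. Using the moves of type (b) I would first consolidate $\gcd(\alpha_{i},\beta_{i},n)$ into the single coordinate $\alpha_{1}$, invoking transitivity of $Sp(2\gamma,\mathbb{Z}/n\mathbb{Z})$ on vectors of a fixed content; then, since the full collection of images generates $G$, the transfers of type (c) let me add a suitable integer combination $\sum_{j}k_{j}\xi_{j}$ to $\alpha_{1}$, forcing $\alpha_{1}$ to become $\gcd$ of everything, namely a unit of $\mathbb{Z}/n\mathbb{Z}$, which a further type--(b) move normalizes to $\alpha_{1}=1$. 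Once $\alpha_{1}=1$ generates $G$, each remaining handle coordinate is set to $1$ by adding the appropriate multiple of $\alpha_{1}$ through the single-- and cross--handle moves of type (b). Throughout this step every $x_{j}$ is only conjugated, so after composing with the braid moves of (a) we still have $\chi(x_{j})=D_{j}x_{\mu(j)}D_{j}^{-1}$, and the whole composition is the desired automorphism.

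The main obstacle, deserving the most care, is family (c): one must exhibit explicit words for the point--pushing automorphisms and check that they genuinely preserve the long relation $\prod_{i}[a_{i},b_{i}]\prod_{j}x_{j}=1$ of \eqref{presentation} (so that they are automorphisms of $\Gamma$, not merely of a free group), and that their effect on the image vector is exactly ``conjugate $x_{j}$, add $\pm\xi_{j}$ to a chosen handle coordinate.'' Granting the correctness of these elementary moves, the remainder is the elementary--divisor bookkeeping over $\mathbb{Z}/n\mathbb{Z}$ sketched above, together with the surjectivity $Sp(2\gamma,\mathbb{Z})\twoheadrightarrow Sp(2\gamma,\mathbb{Z}/n\mathbb{Z})$ used to realize the type--(b) reductions.
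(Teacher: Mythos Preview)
The paper does not supply its own proof of this lemma: it is stated with attribution to Kuribayashi \cite[Lemma~3.1]{kuribayashi}, and the only additional information the paper gives is the remark immediately following, that ``the automorphisms of $\Gamma$ used to compute the automorphism $\chi$ are geometric, induced by orientation--preserving homeomorphisms.'' There is therefore nothing in the paper to compare your argument against beyond that one sentence.

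That said, your proposal is entirely consonant with that remark. Your three families of moves --- Hurwitz braid moves on the elliptic generators, Nielsen/Dehn--twist moves on the handles, and point--pushing transfers between cone points and handles --- are exactly the mapping--class moves on the orbifold $\Delta/\Gamma$, so each one is geometric and orientation--preserving in the sense the paper alludes to, and this is indeed how Kuribayashi's argument proceeds. Your bookkeeping over $\mathbb{Z}/n\mathbb{Z}$ (consolidating the handle gcd via the $Sp(2\gamma,\mathbb{Z}/n\mathbb{Z})$--action, then using transfers to import the $\xi_{j}$ and force a unit, then normalising all handle coordinates to $1$) is a clean way to organise part~(i), and your observation that every step only conjugates the $x_{j}$, so that (ii) survives after composing with the braid permutation, is the right structural point. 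Two small comments: first, the paper adopts the convention $m_{1}\ge\cdots\ge m_{r}$, so equal cone orders occur in consecutive blocks and your adjacent braid moves already suffice to realise any order--preserving $\mu$; second, your identification of family~(c) as the place requiring the most care is accurate --- writing down an explicit point--push that preserves the long relation and has the claimed effect on $(\alpha_{i},\beta_{i})$ is the one genuinely nontrivial verification, and you should carry it out rather than leave it as a remark.
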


\noindent Remark that in the article of Kuribayashi  the automorphisms of $\Gamma$ used to computed the automorphism $\chi$ are geometric induced by orientation preserving homeomorphism.

\noindent Now on, no loss of generality consider the group epimorphism $\theta$ given by
    \begin{align}\label{kuri}
    \theta : \Gamma & \longrightarrow  G \\
     a_{i} & \rightsquigarrow  1 \qquad, \text{ with } i=1,..,\gamma \notag \\
     b_{i} & \rightsquigarrow  1  \qquad, \text { with }i=1,..,\gamma \notag \\
    x_{j} & \rightsquigarrow  \xi_{j} \qquad, \text { with } j=1,..,r \notag
    \end{align}
where $K=\ker(\theta)$ is a torsion free group.

\noindent Hence, it follow
\begin{theo}\label{topocyc}
Let $\Gamma$, $\Gamma'$ Fuchsian groups both with signature $(\gamma;m_{1},\cdots , m_{r})$ and presentation as \eqref{presentation} according to your hyperbolic polygon associated. Consider the group epimorphisms $\theta$, $\theta'$ as \eqref{kuri}.
\noindent If there exists an $s \in \mathbb{Z}$ with $(s,n)=1$, such that
            \begin{equation} \label{congruence}
            (\xi'_{1},...,\xi'_{r})\equiv s(\xi_{1},...,\xi_{r}) ,\mod n
            \end{equation}
then the actions induced by  $\theta$ on $\Delta/K$ and by $\theta'$ on   $\Delta/K'$  are directly topologically equivalent.
\end{theo}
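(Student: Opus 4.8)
The plan is to apply Theorem~\ref{topo}: to conclude that the two actions are directly topologically equivalent it suffices to exhibit a homeomorphism $T\in\home^{+}(\Delta)$, inducing a geometric isomorphism $\chi_{T}:\Gamma\to\Gamma'$, together with an automorphism $\Phi$ of $G=\mathbb{Z}/n\mathbb{Z}$ satisfying $\Phi\circ\theta=\theta'\circ\chi_{T}$. Since $(s,n)=1$, multiplication by $s$ is an automorphism of $G$, so I would fix $\Phi(k)=sk \bmod n$. The whole problem then reduces to producing an orientation--preserving geometric isomorphism $\chi_{T}$ that intertwines $\theta$ and $\theta'$ up to this $\Phi$.

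First I would build a ``model'' isomorphism. Because $\Gamma$ and $\Gamma'$ share the signature $(\gamma;m_{1},\dots,m_{r})$ and both presentations \eqref{presentation} are read off their hyperbolic polygons (so every $x_{j}$ and $x'_{j}$ is a positive minimal rotation), the assignment $a_{i}\mapsto a'_{i}$, $b_{i}\mapsto b'_{i}$, $x_{j}\mapsto x'_{j}$ defines an abstract group isomorphism $\beta:\Gamma\to\Gamma'$. By Theorem~\ref{nec} this $\beta$ is geometric, and since it carries positive minimal rotations to positive minimal rotations it is realized by an \emph{orientation--preserving} homeomorphism (this is precisely the point of fixing the polygon--adapted presentations; compare the orientation--reversing example $x_{j}\mapsto x_{j}^{-1}$ discussed after Theorem~\ref{nec}). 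Computing on generators, $\theta'\circ\beta$ sends $a_{i},b_{i}\mapsto 1$ and $x_{j}\mapsto\xi'_{j}$, so it already agrees with $\Phi\circ\theta$ on the $x_{j}$ thanks to the congruence \eqref{congruence}, namely $\Phi(\xi_{j})=s\xi_{j}\equiv\xi'_{j}$; the only discrepancy is on the handle generators, where $\theta'\circ\beta(a_{i})=1$ while $\Phi\circ\theta(a_{i})=s$.

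To absorb this discrepancy I would invoke the lemma of Kuribayashi stated above, applied to the epimorphism $\theta':\Gamma'\to G$ with the generator $\nu=s$ (legitimate exactly because $(s,n)=1$) and the identity permutation $\mu$. It furnishes an automorphism $\psi$ of $\Gamma'$ with $\theta'\circ\psi(a'_{i})=\theta'\circ\psi(b'_{i})=s$ and $\psi(x'_{j})=D_{j}x'_{j}D_{j}^{-1}$ for suitable $D_{j}\in\Gamma'$; by the remark following that lemma, $\psi$ is geometric and orientation--preserving. Setting $\chi_{T}=\psi\circ\beta$, a composition of orientation--preserving geometric isomorphisms (hence realized by the product $T=T_{\psi}T_{\beta}\in\home^{+}(\Delta)$ of the underlying homeomorphisms), I would verify on generators that $\theta'\circ\chi_{T}(a_{i})=\theta'\circ\chi_{T}(b_{i})=s=\Phi\circ\theta(a_{i})$ and, since $G$ is abelian, $\theta'\circ\chi_{T}(x_{j})=\theta'(D_{j}x'_{j}D_{j}^{-1})=\xi'_{j}=\Phi\circ\theta(x_{j})$. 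Thus $\Phi\circ\theta=\theta'\circ\chi_{T}$, and Theorem~\ref{topo} yields the direct topological equivalence.

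I expect the genuine obstacle to be the orientation bookkeeping rather than the algebra. The algebraic intertwining is essentially forced once $\Phi$ is multiplication by $s$ and Kuribayashi's lemma supplies the handle adjustment; but one must be sure that $\beta$, and hence $\chi_{T}$, is orientation--preserving and not merely geometric. Justifying that matching positive minimal rotations to positive minimal rotations guarantees an orientation--preserving realization, and that composing with the orientation--preserving Kuribayashi automorphism keeps this property, is the delicate step, and it is where the hypothesis that both presentations are adapted to the hyperbolic polygons is indispensable.
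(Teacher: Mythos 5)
Your proof is correct and follows the same overall strategy as the paper --- take $\Phi$ to be multiplication by $s$, map the polygon-adapted generators of $\Gamma$ to those of $\Gamma'$, and extract orientation-preservation from the fact that all $x_{j}$ and $x'_{j}$ are positive minimal rotations --- but it differs in one substantive step, and the difference is to your credit. The paper's proof simply sets $\chi(a_{i})=a'_{i}$, $\chi(b_{i})=b'_{i}$, $\chi(x_{j})=x'_{j}$ and asserts the resulting diagram commutes; as you noticed, with $\theta,\theta'$ normalized as in \eqref{kuri} this fails on the handle generators whenever $\gamma>0$ and $s\not\equiv 1 \pmod{n}$, since $\Phi\circ\theta(a_{i})=s$ while $\theta'(a'_{i})=1$. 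Your composition with the Kuribayashi automorphism $\psi$ of $\Gamma'$ (applied with generator $\nu=s$ and the identity permutation), which is geometric and orientation-preserving by the remark following the lemma, and which leaves the $\theta'$-images of the $x'_{j}$ unchanged because $G$ is abelian, repairs exactly this discrepancy; read literally, the paper's argument is complete only when $\gamma=0$ or $s=1$, which happens to cover the applications of Section 4. The one caveat you share with the paper is the orientation bookkeeping you yourself flag: conjugation by an orientation-reversing homeomorphism converts a positive minimal rotation into a negative one, which distinguishes the two realizations only when some $m_{j}>2$; your justification of $T\in\home^{+}(\Delta)$ is thus exactly as (in)complete as the paper's one-line version, no worse and no better.
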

 \begin{proof}
 \noindent  Suppose that we have the equation \eqref{congruence}, then we may define the homomorphism $\Phi: \mathbb{Z}/n\mathbb{Z} \longrightarrow \mathbb{Z}/n\mathbb{Z}\, ,$ given by $\Phi(1)=s$. Then for each $j$ we have
        $$\Phi(\xi_{j})=s\xi_{j}=\xi'_{j}\, .$$

\noindent Since $s$ and $n$ are relative primes,  $\Phi$ is an automorphism.

\noindent We consider  the following diagram
        \[\begin{diagram}
        \node{0} \arrow{e,t}{ } \node{K} \arrow{s,r,..}{ } \arrow{e,t}{ }
        \node{\Gamma} \arrow{s,r,..}{ \chi } \arrow{e,t}{\theta} \node{\mathbb{Z}/n\mathbb{Z}} \arrow{e,t}{ } \arrow{s,r}{\Phi} \node{0} \\
        \node{0} \arrow{e,t}{ } \node{K'} \arrow{e,t}{ } \node{\Gamma'} \arrow{e,t}{\theta'} \node{\mathbb{Z}/n\mathbb{Z}} \arrow{e,t}{ }\node{0}
        \end{diagram}\]
 hence we may define
        \begin{equation*}
        \chi(a_{j})=a'_{i}, \qquad  \chi(b_{i})=b'_{j}, \qquad
        \chi(x_{j})=x'_{j}
        \end{equation*}
 and then we have that the diagram is commutative.

   \noindent Since $\chi$ maps generators on generators, and these elements satisfy the same relation, we have $\chi$ is an isomorphism. Further by the commutative diagram $\chi(K)=K'$ therefore $\chi|_{K}:K\rightarrow K'$ is an isomorphism.
   %\noindent Furthermore  $\chi(K)=K'$ since  for $x \in K$, we have
    %    \begin{eqnarray*}
     %   \theta' \chi(x) &= \Phi \theta(x) = \Phi(0)=0 \\
      %  & \Rightarrow \chi(x) \in K'
       % \end{eqnarray*}
    %therefore $\chi|_{K}:K \rightarrow K'$ is an isomorphism.

    \noindent It follow of Theorem \ref{topo} that the actions induced by $\theta$ and  by $\theta'$ are topologically equivalent.

    \noindent Our next claim is that the actions induced by $\theta$ and $\theta'$ are directly topologically equivalent. We have to construct according to \cite{keen} the hyperbolic polygons associated $\Gamma$ and $\Gamma'$.

    \noindent Since $x_{j}$ and $x'_{j}$ are positive minimal rotation in the same angle, we have the automorphism $\chi$ is induced by a $f\in \home^{+}(\Delta)$.
    \end{proof}

\noindent Reciprocally consider $\varepsilon$ and $\varepsilon'$ actions on $S$ and $S'$, respectively, such that the signature for the actions is $(\gamma; m_{1},\cdots, m_{r})$. If $\varepsilon$ is topologically equivalent to $\varepsilon'$ then (according to the theorem \ref{topo}) there exists $T \in \home(\Delta)$ such that $ \Psi_{t}\circ \theta_{\varepsilon}= \theta_{\varepsilon'} \circ \chi_{T}$.
\noindent Call $\theta=\varepsilon^{-1}\circ \theta_{\varepsilon}$ and $\theta'=(\varepsilon')^{-1}\circ \theta_{\varepsilon'}$. Then there exists $\Phi\in \auto\left(\mathbb{Z}/n\mathbb{Z}\right)$ such that $\Phi \circ \theta = \theta' \circ \chi_{T}$. Therefore
$\theta' \left(\chi_{T}(x_{j})\right)= \Phi(1) \theta(x_{j})$.

\noindent Observe that this result  is a generalization of a result of J. Gilman in which $n$ is a prime number (See \cite[Lemma 2, p.\,54]{gilman}.)

\subsection{Epimorphism and local structure}

\noindent In \cite{harvey} Harvey gives a relation between  cyclic covering of Riemann sphere  and the epimorphisms of Fuchsian group using the rotation angles. Our result gives a relation for any covering of Riemann sphere.

\noindent As we have seen, for an action  of a finite group $G $ on a compact Riemann surface $S$ (no loss generality consider $G<\auto(S)$),  according to  theorem \ref{existence2} we have an epimorphism $\theta: \Gamma \longrightarrow G\, ,$ defined by $\theta(x) = f \circ \widetilde{x} \circ f^{-1}$.  Furthermore $K=\ker(\theta)$ is a torsion-free Fuchsian group and $\Gamma$ is a Fuchsian group with the same  signature that the action, and $S$ is isomorphic to $\Delta/K$.

\noindent The following theorem we give a relation between the epimorphism $\theta$, and the homomorphism $\delta_{P}$ , for $P$ fixed point of the action.
\begin{theo}
    Let  $\sigma \in \auto(S)_{P} \leq \auto(S)$ of order $n$, and let
        \begin{equation*}
         \mathcal{L}_{P}=\left\{x \in \auto(\Delta):\,
         \exists z_{0} \in \Delta\, , f\circ \pi_{K}(z_{0})=P\, , x(z_{0})=z_{0} \text{ and }
         (f^{-1} \circ \sigma \circ f )\circ \pi_{K} = \pi_{K} \circ x  \right\}
        \end{equation*}
    \noindent Then there is unique primitive complex $n$th root of unity $\zeta$ such that for all $x\in \mathcal{L}$ we have that  $x$ is conjugate to multiplication by $\zeta$, $R(z)=\zeta z$, in $\auto(\Delta)$. Furthermore  $\zeta=\delta_{P}(\sigma)$.
    \end{theo}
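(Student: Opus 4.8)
The plan is to realize every element of $\mathcal{L}_P$ as a lift of $\sigma$ to the universal cover of $S$ and to read its multiplier off directly from $\delta_P(\sigma)$. Write $F = f \circ \pi_K \colon \Delta \to S$. Since $K$ is torsion--free, $\pi_K$ is unbranched, so $F$ is the holomorphic universal covering of $S$ and in particular a local biholomorphism at every point. The defining relation $(f^{-1}\circ\sigma\circ f)\circ\pi_K = \pi_K\circ x$, composed with $f$ on the left, becomes $F\circ x = \sigma\circ F$; thus each $x\in\mathcal{L}_P$ is exactly a lift of $\sigma$ through $F$ that fixes a chosen preimage $z_0$ of $P$. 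First I would check $\mathcal{L}_P\neq\emptyset$: choosing any $z_0$ with $F(z_0)=P$ and any lift $\widetilde{x}\in\auto(\Delta)$ of $f^{-1}\circ\sigma\circ f$, the point $\widetilde{x}(z_0)$ lies over $f^{-1}(P)$, hence equals $k(z_0)$ for some $k\in K$, and then $x := k^{-1}\circ\widetilde{x}$ is again a lift and fixes $z_0$.

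The core of the argument is a single differentiation. Fix $x\in\mathcal{L}_P$ with $x(z_0)=z_0$ and $F(z_0)=P$, and let $(U,\phi)$ be the chart centered at $P$ used to define $\delta_P$, so that $\phi\circ\sigma\circ\phi^{-1}(z)=\sum_{m\geq 1}c_m(\sigma)z^m$ with $c_1(\sigma)=\delta_P(\sigma)$. Composing $F\circ x=\sigma\circ F$ with $\phi$ on the left and setting $g=\phi\circ F$, a local biholomorphism near $z_0$ with $g(z_0)=0$, yields $g\circ x=(\phi\circ\sigma\circ\phi^{-1})\circ g$. Differentiating at $z_0$ and cancelling $g'(z_0)\neq 0$ gives $x'(z_0)=c_1(\sigma)=\delta_P(\sigma)$. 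Hence every $x\in\mathcal{L}_P$ is an elliptic element of $\auto(\Delta)$ whose multiplier at its interior fixed point equals $\delta_P(\sigma)$.

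To finish I would use that the multiplier at an interior fixed point is invariant under holomorphic conjugation, and that conjugating such an $x$ by a disk automorphism carrying $z_0$ to $0$ produces the rotation $z\mapsto x'(z_0)\,z$; thus every $x\in\mathcal{L}_P$ is conjugate in $\auto(\Delta)$ to $R(z)=\zeta z$ with $\zeta=\delta_P(\sigma)$. Since $\delta_P$ is a monomorphism by Theorem~\ref{delta} and $\sigma$ has order $n$, the value $\delta_P(\sigma)$ has order $n$ in $S^1$, i.e.\ it is a primitive $n$th root of unity. Uniqueness is then immediate: if some $\zeta'$ also worked, then for any $x\in\mathcal{L}_P$ (nonempty by the first paragraph) the rotations $z\mapsto\zeta z$ and $z\mapsto\zeta' z$ would be conjugate in $\auto(\Delta)$, forcing $\zeta'=\zeta$ because the multiplier is a conjugacy invariant. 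I expect the only delicate point to be the coordinate bookkeeping in the differentiation step, which relies essentially on $F$ being a local biholomorphism (equivalently, on $K$ being torsion--free) so that $g'(z_0)\neq 0$ and $\delta_P(\sigma)$ may legitimately be read as the multiplier of $x$.
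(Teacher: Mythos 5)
Your proof is correct, but there is a wrinkle in the comparison: the paper does not actually prove this theorem---it only refers the reader to Breuer \cite{breuer}---so what you have written is not an alternative to an internal argument but a self-contained substitute for an external citation. Your route is the standard one, and all the essential points check out: (i) you correctly identify $F=f\circ\pi_{K}$ as the holomorphic universal covering (using that $K$ is torsion-free, so $\pi_{K}$ is unbranched) and rewrite the defining condition as $F\circ x=\sigma\circ F$, so that $\mathcal{L}_{P}$ is precisely the set of lifts of $\sigma$ fixing some point of the fiber over $P$; (ii) your nonemptiness argument---correcting an arbitrary lift $\widetilde{x}$ by the deck transformation $k\in K$ with $\widetilde{x}(z_{0})=k(z_{0})$---is exactly right, and it matters, since without it the uniqueness claim would be vacuous; (iii) the chain-rule computation $x'(z_{0})=c_{1}(\sigma)=\delta_{P}(\sigma)$ is valid at \emph{every} preimage $z_{0}$ of $P$ because $F$ is a local biholomorphism there, which is the one place the torsion-freeness of $K$ is genuinely used (you implicitly also use that $c_{1}$ is independent of the chart at $P$, which is part of the content of Theorem~\ref{delta} and worth a sentence); (iv) the Schwarz-lemma conjugation to the rotation $z\mapsto x'(z_{0})z$, the primitivity of $\zeta$ via injectivity of $\delta_{P}$, and uniqueness via conjugacy-invariance of the multiplier at the interior fixed point are all sound, since if $T x T^{-1}$ fixes $T(z_{0})$ then its derivative there equals $x'(z_{0})$. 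The only cosmetic issue is inherited from the statement itself, which writes $\mathcal{L}$ where it means $\mathcal{L}_{P}$; your proof resolves this in the intended way. In short: where the paper outsources the proof, your argument fills the gap correctly with what is presumably the same lifting-and-multiplier argument found in Breuer.
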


    \noindent The proof of this theorem can be found in \cite{breuer}.

    \noindent Since $\zeta$ is a primitive complex $n$th root of unity,  we may write $\zeta=\omega_{n}^{j}$  where
    $\omega_{n}=\exp\left(\frac{2\pi i}{n}\right)$ and the numbers $j$ and $n$ are  relative primes ($(j,n)=1$). We call $\frac{2\pi j i}{n}$ the \textit{rotation angle}  for $\sigma$ at $P$.

\bigskip

%\noindent Now for $x^{*}\in \Gamma$  with fixed point, say $z^{*}$, it follows of the theorem that $x^{*}$ is conjugate in $\auto(\Delta)$ to a rotation, $R(z)=\omega z$, therefore
 %       $$\delta_{P^{*}}(\theta_{\varepsilon}(x^{*})=\omega\, , $$
%where $P^{*}=f(Kz^{*})$.

%\noindent Note that if $x^{*}$ is a positive minimal rotation then  $\delta_{P^{*}}(\theta_{\varepsilon}(x^{*})=\exp\left(\frac{2\pi}{m^{*}}\right)$ for some order $m^{*}$.

\noindent Now we consider $\Gamma$ with presentation as \eqref{presentation} according to your hyperbolic polygon associated. Recall $x_{j}$ is a positive minimal rotation. Consider $z_{j}$ the fixed point of $x_{j}$. Then $x_{j}\in \mathcal{L}_{P_{j}}$ where $P_{j}=f(\pi_{K}(z_{j}))$.

\noindent Furthermore by the preceding theorem we have $\delta_{P_{j}}(\theta(x_{j}))=\omega_{m_{j}}$.

\begin{theo}\label{harvey}
    Let $G$ be a subgroup of $\auto(S)$, where $G$ acts on $S$ with signature $(0;m_{1},\cdots , m_{r})$. The epimorphism $\theta$ is determined by the fixed points of the action and their stabilizer groups. In other words, if we consider $x_{j}$ and $P_{j}$ as before, then
    $$\theta(x_{j})=\tau_{j}^{\xi_{j}}$$
    where $<\tau_{j}>=G_{P_{j}}$, and where the number $\xi_{j}$ is determined by the equations
        \begin{eqnarray*}
        \delta_{P_{j}}(\tau_{j})&=&\omega_{m_{j}}^{\eta_{j}}\, , \qquad  1\leq\eta_{j}<m_{j}, (\eta_{j},m_{j})=1,\\
        \eta_{j}\cdot \xi_{j} &\equiv& 1 \mod m_{j}
        \end{eqnarray*}
    where $1\leq\xi_{j}<m_{j}$, with $(\xi_{j},m_{j})=1$.
    \end{theo}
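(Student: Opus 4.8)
The plan is to reduce the statement to the single local identity $\delta_{P_j}(\theta(x_j)) = \omega_{m_j}$ recorded just before the theorem, and then simply to invert the homomorphism $\delta_{P_j}$ on the stabilizer $G_{P_j}$. First I would record what the hypothesis $\gamma = 0$ buys us: with genus $0$ the presentation \eqref{presentation} has no hyperbolic generators $a_i,b_i$, so $\Gamma = \langle x_1,\dots,x_r : x_j^{m_j} = x_1\cdots x_r = 1\rangle$ and $\theta$ is completely determined by the values $\theta(x_1),\dots,\theta(x_r)$. This is precisely the content of the phrase ``$\theta$ is determined by the fixed points and their stabilizers'', so the task reduces to computing each $\theta(x_j)$ from local data.

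Next I would locate $\theta(x_j)$ inside the stabilizer and pin down its order. Writing $z_j$ for the fixed point of $x_j$ and $P_j = f(\pi_K(z_j))$, the automorphism $\theta(x_j) = f\circ\widetilde{x}_j\circ f^{-1}$ fixes $P_j$, so $\theta(x_j)\in G_{P_j}$. Because $K=\ker\theta$ is torsion--free (Theorem \ref{existence}), $\theta$ is injective on the finite cyclic group $\langle x_j\rangle$, whence $\theta(x_j)$ has order exactly $m_j = |G_{P_j}|$. Since $G_{P_j}$ is cyclic of order $m_j$ (Theorem \ref{delta}), an element of order $m_j$ generates it, so for a chosen generator $\tau_j$ with $\langle\tau_j\rangle = G_{P_j}$ we may write $\theta(x_j) = \tau_j^{\xi_j}$ with $(\xi_j,m_j)=1$. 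The only remaining unknown is the exponent $\xi_j$.

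Finally I would determine $\xi_j$ by applying the monomorphism $\delta_{P_j}$. As a homomorphism it gives $\delta_{P_j}(\theta(x_j)) = \delta_{P_j}(\tau_j)^{\xi_j} = \omega_{m_j}^{\eta_j \xi_j}$, where $\delta_{P_j}(\tau_j) = \omega_{m_j}^{\eta_j}$ is a primitive $m_j$th root of unity, so $(\eta_j,m_j)=1$. On the other hand the discussion preceding the theorem yields $\delta_{P_j}(\theta(x_j)) = \omega_{m_j}$, since $x_j$ is a positive minimal rotation and hence, by the preceding theorem, conjugate in $\auto(\Delta)$ to $z\mapsto\omega_{m_j}z$. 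Comparing exponents of the primitive $m_j$th root $\omega_{m_j}$ forces $\eta_j\xi_j \equiv 1 \pmod{m_j}$, which determines $\xi_j$ uniquely modulo $m_j$ because $\eta_j$ is invertible.

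The computation is short, and all the weight rests on the normalization $\delta_{P_j}(\theta(x_j)) = \omega_{m_j}$. Accordingly, the point needing the most care is the bookkeeping that matches each generator $x_j$ to the correct branch point $P_j$ and cone order $m_j$: one must use that $\langle x_j\rangle$ is exactly the $\Gamma$--stabilizer of the point $z_j$ it fixes, that $|G_{P_j}| = m_j$, and that it is the positive--minimal--rotation normalization of the $x_j$ coming from the polygon of \cite{keen} that makes the value $\omega_{m_j}$ appear (rather than some other primitive $m_j$th root). Once that alignment is fixed, the argument is purely formal.
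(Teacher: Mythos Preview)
Your proposal is correct and follows essentially the same route as the paper: locate $\theta(x_j)$ in the cyclic stabilizer $G_{P_j}$, write it as a power of a chosen generator $\tau_j$, and then apply the monomorphism $\delta_{P_j}$ together with the normalization $\delta_{P_j}(\theta(x_j))=\omega_{m_j}$ to pin down the exponent via $\eta_j\xi_j\equiv 1\pmod{m_j}$. The only substantive addition in the paper's proof that you omit is a closing verification that the element $\tau_j^{\xi_j}$ is independent of which generator $\tau_j$ of $G_{P_j}$ one starts with (i.e.\ if $\widehat\tau_j=\tau_j^{t}$ and $\widehat\eta_j,\widehat\xi_j$ are defined analogously, then $\tau_j^{\xi_j}=\widehat\tau_j^{\widehat\xi_j}$); this is what justifies the phrase ``determined by'' in the statement, and you may want to add a line to that effect.
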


    \begin{proof}

    %\noindent Let $\tau_{j}$ be a generator of the group $G_{P_{j}}$.

    %\noindent As $\theta(x_{j}) \in G_{P_{j}}$, then
     %   $$\theta(x_{j})=\tau_{j}^{\xi_{j}}\, ,$$
    %for some $0<\xi_{j}<m_{j}$.

    %\noindent We recall $\delta_{P_{j}}\left(\theta(x_{j})\right)=\omega_{m_{j}}$, but as $\delta_{P_{j}}$ is a group monomorphism there exists a unique $\xi_{j}$, with $1\leq\xi_{j}<m_{j}, (\xi_{j},m_{j})=1$ and $\delta_{P_{j}}(\tau_{j}^{\xi_{j}})=\omega_{m_{j}}$.

    %\noindent Now we will calculate $\xi_{j}$. If
     %   $$\delta_{P_{j}}(\tau_{j})=\omega_{m_{j}}^{\eta_{j}}$$
    %then
     %   $$\delta_{P_{j}}(\tau_{j}^{\xi_{j}})=\omega_{m_{j}}^{\eta_{j}\cdot\xi_{j}}$$
    %therefore
    %$$\eta_{j}\cdot \xi_{j} \equiv 1 \mod m_{j}$$

    \noindent Recall that for the fixed point $P_{j}$ we have $G_{P_{j}}$ is a cyclic group of order $m_{j}$.
    Let $\tau_{j}$ be a generator of $G_{P_{j}}$. Then  $\theta(x_{j})=\tau_{j}^{\xi_{j}}$ for some $0<\xi_{j}<m_{j}$ this is because $\theta(x_{j})\in G_{P_{j}}$.

    \noindent Since $\delta_{P_{j}}$ is a group monomorphism and $\delta_{P_{j}}\left(\theta(x_{j})\right)=\omega_{m_{j}}$ then
    $$\omega_{j}= \delta_{P_{j}}\left(\tau_{j}^{\xi_{j}}\right)= \delta_{P_{j}}(\tau_{j})^{\xi_{j}}$$
    therefore $\left(\xi_{j},m_{j}\right)=1$.

    \noindent If $\delta_{P_{j}}(\tau_{j})=\omega_{m_{j}}^{\eta_{j}}$ (where $(\eta_{j},m_{j})=1$) then
    $$\omega_{m_{j}}=\delta(\tau_{j})^{\xi_{j}}=\omega_{m_{j}}^{\eta_{j}\, \xi_{j}}$$
    hence
    $$1\equiv \eta_{j}\, \xi_{j} \quad \mod{m_{j}}$$

    \noindent  If we take another generator of $G_{P_{j}}$, say $\widehat{\tau}_{j}$, then we may to do the same computations, hence
    $$\theta(x_{j})={\widehat{\tau}_{j}}^{\widehat{\xi}_{j}}$$
    where
    \begin{eqnarray*}
        \delta_{P_{j}}({\widehat{\tau}}_{j})&=&\omega_{m_{j}}^{\widehat{\eta}_{j}}\\
        {\widehat{\eta}}_{j}\cdot \widehat{\xi}_{j} &\equiv& 1 \mod m_{j}
        \end{eqnarray*}

    \noindent As $\tau_{j}$ is a generator of $G_{P_{j}}$, we have there exists $0<t<m_{j}$ such that $\widehat{\tau}_{j}=\tau_{j}^{t}$ thus we have

    $$\omega_{m_{j}}^{\widehat{\eta}_{j}}=\delta_{P_{j}}(\widehat{\tau}_{j})=\delta_{P_{j}}(\tau_{j}^{t})
    =\omega_{m_{j}}^{\eta_{j}\cdot t}$$
    then
        \begin{eqnarray*}
        \widehat{\eta}_{j} &\equiv& \eta_{j}\cdot t \mod m_{j} \qquad |\cdot \xi_{j} \\
        \widehat{\eta}_{j}\cdot \xi_{j} &\equiv& \eta_{j}\cdot \xi_{j}t \equiv  t  \mod m_{j} \qquad | \cdot \widehat{\xi}_{j}\\
        \xi_{j} \equiv  \widehat{\xi}_{j}\cdot \widehat{\eta}_{j}\cdot \xi_{j} &\equiv& t\cdot \widehat{\xi}_{j}  \mod m_{j}
        \end{eqnarray*}
    therefore $\tau_{j}^{\xi_{j}} = {\widehat{\tau}_{j}}^{\widehat{\xi}_{j}}$.
    \end{proof}

    \noindent Remark that as $\theta$ is an epimorphism for the $r-$tuple $(\theta(x_{1}),\cdots, \theta(x_{r}))$ is has
    \begin{enumerate}
        \item $G=<\theta(x_{1}),\cdots, \theta(x_{r})>$.
        \item $\ord(\theta(x_{j}))=m_{j}$ , for each $j=1,..\,,r$.
        \item $\theta(x_{1})\cdots \theta(x_{r})=1$.
        \end{enumerate}

    \noindent In general we say that the $r-$tuple $(\sigma_{1},\cdots,\sigma_{r}) \in G^{r}$ is a \textit{generating vector}  for $G$ of type $(0;m_{1},\cdots,m_{r})$ if this satisfy the three preceding conditions replace $\theta(x_{j})$ by $\sigma_{j}$.

        %\begin{enumerate}
        %\item $G=<\sigma_{1},\cdots,\sigma_{r}>$.
        %\item $\ord(\sigma_{j})=m_{j}$ , for each $j=1,..\,,r$.
        %\item $\sigma_{1}\cdots\sigma_{r}=1$.
        %\end{enumerate}

    \noindent Thus given the action on $S$ we have a generating vector of type the signature of this action. The reciprocal is know  as the Existence Riemann theorem. For more detail see \cite{broughton}.

\section{Families of Riemann surfaces with equivalent actions}

\noindent In this section we produce for each $n$,  two families of Riemann surfaces of genus  $3(2^{n}-1)$ with group of automorphisms of order $2^{n+2}$ and signature $(0;2^{n+1},2^{n+1},2^{n},2)$. For one of the families the group we will be abelian, and for the other it will be a semidirect product. In both cases, we will have that there exist two cyclic subgroups which define directly topologically, but not conformally, equivalent actions.

\begin{theo}\label{defi}
Let $f_{a,\lambda}$ be the polynomial given by
    \begin{equation*}
    f_{a,\lambda}\left(x,y\right)=y^{2^{n}}-x^{a}\left(x^{2}-1\right)^{a}\left(x^{2}-\lambda^{2}\right)\left(x^{2}-\lambda^{-2}\right)
    \end{equation*}
where $n,a \in \mathbb{N}, \,$ $ \lambda\in \mathbb{C}, \,$ and  $\,\lambda^{4}\neq 1,0$.

\noindent Then, for each odd number $a$ and each $\lambda$, we have that $f_{a,\lambda}$ defines a Riemann surface $S_{a,\lambda}$ of genus $3(2^{n}-1)$.

\noindent Furthermore, the possible singular points for the homogeneous polynomial associated to $f_{a,\lambda}$ are
    \[\begin{tabular}{c|c|c|c|c}
     case & $\sign(2^{n}-3a-4)$  & $a$ & condition   &  singular points\\
    \hline
    $1$ & $+$ &  $1$ &  $n=3$ & \small $\emptyset$ \\
    \hline
    $2$ & $+$ & $\neq 1$ &  $2^{n}-3a-5=0$ & \small $\{[0,0,1],[1,0,1],[-1,0,1]\}$ \\
     \hline
    $3$ & $+$ &  $1$ &  $2^{n}-3a-5\neq 0$ & \small $\{[1,0,0]\}$ \\
     \hline
     $4$ & $ +$ & $\neq 1$ &  $2^{n}-3a-5\neq 0$ & \small $\{[0,0,1],[1,0,0],[1,0,1],[-1,0,1]\}$ \\
     \hline
    $5$ & $-$ & $\neq 1$ & &   \small $\{[0,0,1],[0,1,0],[1,0,1],[-1,0,1]\}$ \\
     \hline
    $6$ & $-$ & $1$ &  $n=1,2$ & \small $\{[0,1,0]\}$ \\
     \hline
    \end{tabular}\]

    Table 1: Singular points

\end{theo}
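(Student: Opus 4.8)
The plan is to treat the curve as the cyclic cover $\pi\colon S_{a,\lambda}\to\mathbb{P}^1$, $(x,y)\mapsto x$, of degree $2^n$ attached to the equation $y^{2^n}=g(x)$ with $g(x)=x^a(x^2-1)^a(x^2-\lambda^2)(x^2-\lambda^{-2})$, a polynomial of degree $3a+4$. The hypothesis $\lambda^4\neq 0,1$ guarantees that the seven finite roots $0,\pm1,\pm\lambda,\pm\lambda^{-1}$ of $g$ are pairwise distinct, with $0,\pm1$ of multiplicity $a$ and $\pm\lambda,\pm\lambda^{-1}$ simple. Before anything else I would record the two arithmetic facts that make the odd hypothesis on $a$ decisive: $\gcd(a,2^n)=1$ and, since $3a+4$ is then odd, $\gcd(3a+4,2^n)=1$.

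For the genus I would apply Riemann--Hurwitz to $\pi$. Over each of $0,\pm1$ the zero of $g$ has order $a$ with $\gcd(a,2^n)=1$, hence a single totally ramified preimage of index $2^n$; over each of $\pm\lambda,\pm\lambda^{-1}$ the zero is simple, again one totally ramified point; and over $x=\infty$ the pole of $g$ has order $3a+4$ with $\gcd(3a+4,2^n)=1$, so one totally ramified point as well. This gives eight branch values, each with a single preimage of ramification $2^n-1$. Since one fibre is totally ramified the cover is connected (equivalently, $g$ is not a square in $\mathbb{C}(x)$ because $x$ occurs to the odd power $a$), so $S_{a,\lambda}$ is a single Riemann surface, and Riemann--Hurwitz yields $2g_S-2=-2\cdot 2^n+8(2^n-1)=6\cdot 2^n-8$, i.e.\ $g_S=3(2^n-1)$.

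For the singular points I would pass to the homogenization $F(x,y,z)$ of degree $D=\max(2^n,3a+4)$; since for odd $a$ one never has $2^n=3a+4$ (that equation forces $a$ even), the two regimes are exactly $\sign(2^n-3a-4)=\pm$, which is what splits the table. By Euler's identity the singular locus is cut out by $F_x=F_y=F_z=0$. In the affine chart $z=1$ one has $f_y=2^n y^{2^n-1}$, forcing $y=0$, after which $f=f_x=0$ becomes $g(x)=g'(x)=0$; the multiple roots of $g$ are precisely $0,\pm1$ when $a\ge 2$ and none when $a=1$, producing $[0:0:1],[\pm1:0:1]$ exactly in the rows with $a\neq 1$.

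The delicate part, and the main obstacle, is the behaviour at infinity, where one must read off the $z$--exponents carefully. When $\sign(2^n-3a-4)=+$ the only point with $z=0$ on $F$ is $[1:0:0]$; dehomogenizing at $x=1$ gives $y^{2^n}-h(z)\,z^{\,2^n-3a-4}$ with $h(0)=1$, which is singular precisely when the exponent $2^n-3a-4$ is at least $2$, that is when $2^n-3a-5\neq 0$, and smooth when $2^n-3a-5=0$. When $\sign(2^n-3a-4)=-$ the only point with $z=0$ is $[0:1:0]$; dehomogenizing at $y=1$ gives $z^{\,3a+4-2^n}-P(x,z)$ where $P$ is homogeneous of degree $3a+4\ge 7$, so $P$ and its first partials vanish at the origin, and the point is singular as soon as $3a+4-2^n\ge 2$; the excluded value $3a+4-2^n=1$ cannot occur, since it would force $2^n=3(a+1)$, contradicting $3\nmid 2^n$. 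Combining the affine count with these two infinite--point analyses reproduces the six rows of Table 1; the only real care needed is the borderline exponent bookkeeping together with the two divisibility exclusions ($2^n\neq 3a+4$ and $2^n\neq 3a+3$) that keep the relevant $z$--exponents at least $2$.
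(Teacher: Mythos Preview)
Your argument is correct and follows essentially the same route as the paper: the degree-$2^{n}$ projection $(x,y)\mapsto x$ with Riemann--Hurwitz for the genus, and a direct partial-derivative check on the homogenization, split according to $\sign(2^{n}-3a-4)$, for the singular locus. The paper additionally writes down explicit normalization charts at each singular point (its Tables~2 and~3), which it needs later to compute the rotation numbers $\delta_{P}$; your gcd bookkeeping and the two exclusions $2^{n}\neq 3a+4$, $2^{n}\neq 3a+3$ make the case analysis slightly crisper but are not a different method.
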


\begin{proof}
\noindent Observe that if $2^{n}-3a-4=0$ then $a$ is an even number. Moreover  $2^{n}-5-3a=0$ for some odd number $a$, if only if $n$ is an odd number. Particularly if $a=1$ then $n=3$.

\noindent We have two cases for the homogeneous polynomial associated to $f_ {a,\lambda}$.
    \begin{enumerate}
     \item Case $2^{n}-3a-4 > 0$. \\
        \begin{equation*}
            F_{1}\left(X,Y,Z\right)=Y^{2^{n}}-X^{a}Z^{2^{n}-3a-4}\left(X^{2}-Z^{2}\right)^{a}\left(X^{2}-\lambda^{2}Z^{2}\right)\left(X^{2}-\lambda^{-2}Z^{2}\right)
        \end{equation*}
where $[X,Y,Z] \in \mathbb{P}^{2}\mathbb{C}$ (Projective plane) and $x=\frac{X}{Z}$ , $y =\frac{Y}{Z}$.

\noindent For
    \begin{itemize}
    \item $a \neq  1, 2^{n}-3a-5 \neq 0$ , we have $4$ singular points, they are
    $$\left\{[0,0,1],[1,0,0], [1,0,1],[-1,0,1]\right\}\,$$

    \noindent By the Normalization process we have the following charts for theses points
    \[\begin{tabular}{c|c|c}
   singular point & local coordinate & \\
   \hline \text{  }$[0,0,1]$ & $s\rightsquigarrow [s^{2^{n}},s^{a}h_{0}(s),1]$ & $h_{0}(0)\neq 0$ \\
   \hline \text{  }$[1,0,1]$ & $s\rightsquigarrow [s^{2^{n}}+1,s^{a}h_{1}(s),1]$ & $h_{1}(0)\neq 0$ \\
   \hline \text{  }$[-1,0,1]$ & $s\rightsquigarrow [s^{2^{n}}-1,s^{a}h_{-1}(s),1]$ & $h_{-1}(0)\neq 0$ \\
   \hline \text{  }$[1,0,0]$ & $t\rightsquigarrow [1,t^{2^{n}-3a-4}h_{\infty}(t),t^{2^{n}}]$ & $h_{\infty}(0)\neq 0$\\
   \hline
   \end{tabular}\]
 \textit{Table 2: Local coordinates for singular points case $F_{1}$}

 \bigskip
\noindent where for each $j$\, , $h_{j}$ is an holomorphic maps defined on an open subset of complex plane.

    \item $a \neq 1, 2^{n}-3a-5 = 0$, then $n$ is an odd number and the singular points are $\left\{[0,0,1], [1,0,1],[-1,0,1]\right\}\, .$
    \item $a = 1, 2^{n}-3a-5 \neq 0$, then $n>3$, since $2^{n}-7>0 \text{ and } 2^{n}-8\neq0\, .$
    \noindent Then the singular points are $\left\{[1,0,0]\right\}$
    \item $a = 1, 2^{n}-3a-5 = 0$, then $n=3$, since $2^{n}-7>0 \text{ and } 2^{n}-8=0$.
    \noindent Then in this case $F_{1}$ has not  singular points.
    \end{itemize}

    \item Case $2^{n}-3a-4 < 0$.
        \begin{equation*}
            F_{2}\left(X,Y,Z\right)=Y^{2^{n}}Z^{3a+4-2^{n}}-X^{a}\left(X^{2}-Z^{2}\right)^{a}\left(X^{2}-\lambda^{2}Z^{2}\right)\left(X^{2}-\lambda^{-2}Z^{2}\right)\,.
         \end{equation*}

\noindent If $a \neq  1 $ , we have that the singular points are
$$\left\{[0,0,1], [1,0,1],[-1,0,1], [0,1,0]\right\}$$

\noindent By the Normalization process we have the following charts for theses points
   \[\begin{tabular}{c|c|c}
   point & local coordinate & \\
   \hline \text{  }$[0,0,1]$ & $s\rightsquigarrow [s^{2^{n}},s^{a}g_{0}(s),1]$ & $g_{0}(0)= 1$ \\
   \hline \text{  }$[1,0,1]$ & $s\rightsquigarrow [s^{2^{n}}+1,s^{a}g_{1}(s),1]$ & $g_{1}(0)\neq 0$ \\
   \hline \text{  }$[-1,0,1]$ & $s\rightsquigarrow [s^{2^{n}}-1,s^{a}g_{-1}(s),1]$ & $g_{-1}(0)\neq 0$ \\
   \hline \text{  }$[0,1,0]$ & $t\rightsquigarrow [t^{3a+4-2^{n}}g_{\infty}(t),1,t^{3a+4}]$ & $g_{\infty}(0)=1$ \\
   \hline
   \end{tabular}\]
 \textit{Table 3: Local coordinates for singular points case $F_{2}$}

\bigskip
\noindent where for each $j$\, , $g_{j}$ is an holomorphic maps defined on an open subset of complex plane.

\noindent Now if $a=1$ as $2^{n}-3a-4<0$ then  $n\leq 2$. These cases are studied by G. Gonz\'alez-Diez and R. Hidalgo in \cite{gabrub}.
 \end{enumerate}

\noindent Now using the Normalization process  we get a Riemann Surfaces of genus $3(2^{n}-1)$. This is because we may define the holomorphic map
$\pi: S_{a,\lambda} \longrightarrow \widehat{\mathbb{C}}$ given by $\pi(x,y) = x$.

\noindent Note that $\pi$ has degree $2^n$. The ramification point set of  $\pi$ is
\begin{equation}\label{ramipoints}
B= \{P_{0}, Q_{0},[\pm 1,0,1],[\pm \lambda,0,1], [\pm \lambda^{-1},0,1]\}
\end{equation}
where $P_{0}=[0,0,1]$ and either $Q_{0}= [1,0,0]$ (if $2^{n}-3a-4>0$) or $Q_{0}=[0,1,0]$ (if $2^{n}-3a-4<0$). The branch point set is $\pi(B)=\{0, \infty, \pm \lambda, \pm \lambda^{-1}, \pm 1\}$. For each $P\in B$ we have the multiplicity of $P$ is $2^{n}$.

\noindent By the Riemann--Hurwitz formula it follow the genus of $S$ is $3 \left(2^{n}-1\right)$.
\end{proof}

\bigskip
\noindent The following theorem yields information about the automorphisms group of $S_{a,\lambda}$. The interest of the  theorem is in the assertion that for each Riemann surface $S_{a,\lambda}$ the automorphisms group is not trivial (except some cases).

    \begin{theo}\label{auto}
    Let $\tau_{1},\tau_{2}$ be the self--maps of $S_{a,\lambda}$  defined by
        \begin{eqnarray*}
        \tau_{1}(x,y)&=&(-x,\omega_{2^{n+1}}y) \\
        \tau_{2}(x,y)&=& \left(\dfrac{1}{x},\dfrac{\omega_{2^{n+1}}y}{x^{c}}\right)
        \end{eqnarray*}
    where $c$ is a natural number  determined by the  equation $c\cdot 2^{n-1} = 2a+2$.

    \noindent Then $\tau_{1},\tau_{2} \in \auto(S_{a,\lambda})$ and they have order $2^{n+1}$ each.

    \noindent Furthermore depending on the values for $a$ we have:
        \begin{enumerate}
        \item If $c$ is an even number, then $G_{1}=<\tau_{1},\tau_{2}>$ is an abelian group, isomorphic to $\mathbb{Z}/2^{n+1}\mathbb{Z} \times \mathbb{Z}/2\mathbb{Z}$.\\
            We call $\mathfrak{S}_{1}$ the corresponding family of surfaces.
        \item If $c$ is an odd number, then $G_{2}=<\tau_{1},\tau_{2}>$ is a group isomorphic to $\mathbb{Z}/2^{n+1}\mathbb{Z} \rtimes_{h} \mathbb{Z}/2\mathbb{Z}$, where $\tau_{2}\tau_{1}=\tau_{1}^{2^{n}+1}\tau_{2}$.\\
            We call $\mathfrak{S}_{2}$ the corresponding family of surfaces.
        \end{enumerate}
    \end{theo}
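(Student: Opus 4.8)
The plan is to treat $\tau_1,\tau_2$ first as rational self--maps of the affine model $y^{2^n}=P(x)$, where $P(x)=x^a(x^2-1)^a(x^2-\lambda^2)(x^2-\lambda^{-2})$, check that each preserves this equation, and then invoke the fact that a birational self--map of a smooth projective curve extends uniquely to a biholomorphism of its smooth model $S_{a,\lambda}$. First I would record the two arithmetic facts that drive every computation: $\omega_{2^{n+1}}^{2^n}=-1$, and, because $a$ is odd, $P(-x)=-P(x)$ together with $P(1/x)=-x^{-(4a+4)}P(x)$ (the latter obtained by pulling $x^{-2}$ out of each quadratic factor and $x^{-2a}$ out of $(x^{-2}-1)^a$, the leftover constant collapsing since $\lambda^2\lambda^{-2}=1$). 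For $\tau_1$ one gets $(\omega_{2^{n+1}}y)^{2^n}=-P(x)=P(-x)$, so the equation is preserved. For $\tau_2$ one finds $(\omega_{2^{n+1}}y/x^{c})^{2^n}=-P(x)/x^{c2^n}$, which equals $P(1/x)=-x^{-(4a+4)}P(x)$ exactly when $c\,2^{n}=4a+4$, i.e.\ exactly the defining condition $c\cdot 2^{n-1}=2a+2$. Hence $\tau_1,\tau_2\in\auto(S_{a,\lambda})$.

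Next I would compute powers. Induction gives $\tau_1^{k}(x,y)=((-1)^{k}x,\,\omega_{2^{n+1}}^{k}y)$, so $\tau_1$ has order $2^{n+1}$. For $\tau_2$ the decisive observation is that the two factors $x^{c}$ cancel upon iteration, yielding $\tau_2^{2}(x,y)=(x,\,\omega_{2^{n+1}}^{2}y)$; thus $\tau_2^{2}=\tau_1^{2}$. Since the even powers of $\tau_2$ are powers of $\tau_1^2$ and the odd powers send $x\mapsto 1/x$ (so are never the identity), $\tau_2$ also has order $2^{n+1}$. The identity $\tau_1^{2}=\tau_2^{2}$ is the structural fact I would exploit throughout. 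The two cases are then separated by a direct product computation:
\begin{align*}
\tau_1\tau_2(x,y)&=(-1/x,\,\omega_{2^{n+1}}^{2}y/x^{c}),\\
\tau_2\tau_1(x,y)&=(-1/x,\,(-1)^{c}\,\omega_{2^{n+1}}^{2}y/x^{c}).
\end{align*}
When $c$ is even these coincide, so $G_1$ is abelian; when $c$ is odd they differ by the factor $-1=\omega_{2^{n+1}}^{2^{n}}$, and comparing with $\tau_1^{2^{n}+1}\tau_2(x,y)=(-1/x,-\omega_{2^{n+1}}^{2}y/x^{c})$ gives the relation $\tau_2\tau_1=\tau_1^{2^{n}+1}\tau_2$ asserted in the statement.

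Finally I would identify the abstract groups. In the abelian case set $\rho=\tau_1\tau_2^{-1}$; then $\rho^{2}=\tau_1^{2}\tau_2^{-2}=1$, and $\rho\notin\langle\tau_1\rangle$ because every power of $\tau_1$ fixes or negates $x$ whereas $\rho$ inverts it. Thus $G_1=\langle\tau_1\rangle\times\langle\rho\rangle\cong\mathbb{Z}/2^{n+1}\mathbb{Z}\times\mathbb{Z}/2\mathbb{Z}$, of order $2^{n+2}$. In the non--abelian case the relation shows $N=\langle\tau_1\rangle$ is normal of index $2$, so $|G_2|=2^{n+2}$. The one delicate point is that $\tau_2$ itself has order $2^{n+1}$ rather than $2$, so to display a genuine semidirect product I would produce an order--$2$ complement: using the commutation relation and $\tau_2^{2}=\tau_1^{2}$ one gets $(\tau_1^{k}\tau_2)^{2}=\tau_1^{\,k(2^{n}+2)+2}$, and the choice $k=2^{n-1}-1$ makes the exponent $2^{2n-1}\equiv 0\pmod{2^{n+1}}$ (for $n\geq 2$), so $s:=\tau_1^{2^{n-1}-1}\tau_2$ satisfies $s^{2}=1$, $s\notin N$, and $s\tau_1 s^{-1}=\tau_1^{2^{n}+1}$. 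Hence $G_2=N\rtimes\langle s\rangle\cong\mathbb{Z}/2^{n+1}\mathbb{Z}\rtimes_{h}\mathbb{Z}/2\mathbb{Z}$ with $h$ multiplication by $2^{n}+1$.

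I expect the main obstacle to be the verification that $\tau_2$ is an automorphism, namely keeping the signs and the powers of $x$ in $P(1/x)$ under control so that the condition $c\cdot 2^{n-1}=2a+2$ emerges cleanly; the construction of the order--$2$ complement $s$ in the non--abelian case is a secondary subtlety, since the naive generator $\tau_2$ does not have order $2$.
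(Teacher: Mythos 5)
Your proposal is correct, and it follows the paper's outline in most respects, but it handles the one genuinely delicate step by a different mechanism. The paper, like you, dismisses the verification that $\tau_{1},\tau_{2}$ preserve the affine equation as routine (your explicit identities $P(-x)=-P(x)$ and $P(1/x)=-x^{-(4a+4)}P(x)$, forcing $c\cdot 2^{n}=4a+4$, are a welcome expansion of what the paper leaves implicit), and it establishes the orders exactly as you do, via $\tau_{1}^{2}=\tau_{2}^{2}=(x,\omega_{2^{n+1}}^{2}y)$. Where you diverge is in proving $\tau_{1},\tau_{2}\in\auto(S_{a,\lambda})$: the paper passes to homogeneous coordinates and checks holomorphicity by hand at the singular points of the plane model, using the normalization charts of Theorem \ref{defi} (e.g.\ computing $\varphi\circ\tau_{2}\circ\phi^{-1}(s)=\omega s$ at $P_{0}$, where $\tau_{2}(P_{0})=Q_{0}$), whereas you invoke the general fact that a birational self--map of a projective curve extends uniquely to an automorphism of its smooth model. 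Your route is shorter and standard, and it buys uniformity across all the singularity cases of Table 1; the paper's chart computations buy something concrete in return, namely the explicit local behaviour of $\tau_{1},\tau_{2}$ at the branch points, which the paper reuses later to compute the rotation numbers $\delta_{P}$ feeding into Theorem \ref{harvey} and Theorem \ref{example}, so the computational detour is not wasted there. For the group structure your argument is essentially the paper's: the same comparison of $\tau_{1}\tau_{2}$ with $\tau_{2}\tau_{1}$ splits the two cases by the parity of $c$, and your order--$2$ complement $s=\tau_{1}^{2^{n-1}-1}\tau_{2}$ is exactly the element $\nu=\tau_{1}^{2^{n-1}-1}\tau_{2}$ the paper uses to exhibit the semidirect product (your computation $(\tau_{1}^{k}\tau_{2})^{2}=\tau_{1}^{k(2^{n}+2)+2}$ is more detailed than the paper's bare assertion, and your caveat $n\geq 2$ is harmless since $c$ odd together with $c\cdot 2^{n-2}=a+1$ and $a$ odd forces $n\geq 3$); you also supply the direct--product decomposition $G_{1}=\langle\tau_{1}\rangle\times\langle\tau_{1}\tau_{2}^{-1}\rangle$ in the abelian case, which the paper states without proof.
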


\begin{proof}
It is not difficult to check that $f_{a,\lambda}(\tau_{1}(x,y))=0$ and $f_{a,\lambda}(\tau_{2}(x,y))=0$.

\noindent Note that
$$\tau_{1}^{2}(x,y)=\tau_{2}^{2}(x,y)=(x,\omega_{2^{n+1}}^{2}y)$$
Then the order of $\tau_{1}^{2}$ is $2^{n}$, since $\omega_{2^{n+1}}^{2}$ is a $2^{n}$-th primitive root of unity, therefore  $\tau_{1}$ and $\tau_{2}$ has order $2^{n+1}$.

\noindent Now we consider the homogeneous coordinates for $\tau_{1},\tau_{2}$. They are given by
        \begin{eqnarray*}
        \tau_{1}[X,Y,Z]&=&[-X,\omega_{2^{n+1}} Y,Z] \\
        \tau_{2}[X,Y,Z]&=&[ZX^{c-1},\omega_{2^{n+1}} YZ^{c-1},X^{c}]\,
        \end{eqnarray*}

\noindent To prove that $\tau_{1},\tau_{2}\in \auto(S_{a,\lambda})$, we must  prove that  for any charts $(U,\varphi),(V,\phi)$, on $S_{a,\lambda}$, such that $\tau_{j}(V) \cap U \neq \emptyset$ , the map $\varphi \circ \tau_{j} \circ \phi^{-1}\, ,$ is a holomorphic function (on  some subset of $\mathbb{C}$).

\noindent We will do the computations for $\tau_{2}$ at $P_{0}$ and suppose $2^{n}-3a-4 > 0$.

\noindent Using the chart at $P_{0}$ (see table 2, theorem \ref{defi})  we have
        \begin{eqnarray*}
        \tau_{2}[s^{2^{n}},s^{a}h_{0}(s),1]&=& \left[1,s^{2^{n}-3a-4}\omega_{2^{n+1}}h_{0}(s),s^{2^{n}}\right]\, .
        \end{eqnarray*}
then  $\tau_{2}(P_{0})=Q_{0}$.

    \noindent Now consider the  chart at $Q_{0}$  given in the table 2, theorem \ref{defi}. Then
        \begin{eqnarray*}
         \footnotesize \varphi \circ \tau_{2} \circ \phi^{-1}(s)= \varphi \circ \tau_{2}[s^{2^{n}},s^{a}h_{0}(s),1] = \varphi [1,s^{2^{n}-3a-4}\omega_{2^{n+1}}h_{0}(s),s^{2^{n}}]
         =\omega s
        \end{eqnarray*}
        where $\omega$ is a $2^{n}$-th root of unity (we recall that $t^{2^{n}}=s^{2^{n}}$). Therefore, the map is a holomorphic function.

    \noindent It is not difficult to verify the preceding process for the other charts.

    \noindent Recalling that $2^{n-1}c=2a+2$,  we have
        \begin{eqnarray*}
        \tau_{1}\tau_{2}[X,Y,Z]&=&[-ZX^{c-1},\omega_{2^{n+1}}^{2} Z^{c-1}Y,X^{c}]\\
        \\
        \tau_{2}\tau_{1}[X,Y,Z]
                            &=&\left\{\begin{array}{cc}
                            \tau_{1}\tau_{2}[X,Y,Z] &, c \text{ is an even number}\\
                            \\
                            \tau_{1}^{2^{n}+1}\tau_{2}[X,Y,Z] &, c \text{ is an odd number }\\
                            \end{array} \right.\, .
        \end{eqnarray*}

    %\noindent Moreover for each odd number $j$  we have
     %   \[\begin{tabular}{c|c}
      %  $\left(\tau_{1}^{j}\tau_{2}\right)^{2}=\tau_{1}^{2(j+1)}$ & $c$ is an even
       %  number \\
       % \hline $\left(\tau_{1}^{j}\tau_{2}\right)^{2}=\tau_{1}^{2(j+1)+2^{n}}$ & $c$ is
       % an odd number \\
       % \end{tabular}\]
    %\noindent Then

     %   \[\begin{tabular}{c|c|c}
      %  \footnotesize $j=2^{k}t-1$ &\footnotesize $\ord(\tau_{1}^{j}\tau_{2})=\left\{\begin{array}{cc}
       % 2^{n-k+1} &  ,k\leq n, t\equiv 1 \mod 2 \\
        %2 & , \text{ in other case}
        %\end{array}
       % \right\}$ & \footnotesize $c$ even number\\
        %\hline
        %\footnotesize $j=2^{k}t-2^{n-1}-1$ & \footnotesize $\ord(\tau_{1}^{j}\tau_{2})=\left\{\begin{array}{cc}
       % 2^{n-k+1} & ,k\leq n, t\equiv 1 \mod 2 \\
        %2 & , \text{in other case}
        %\end{array}
       % \right\}$ & \footnotesize $c$ odd number\\
       % \end{tabular}\]

%\noindent  Thus we may write, for both cases when either $c$ is an even number or
%$c$ is an odd number, the following complete list of elements on $<\tau_{1},\tau_{2}>$
%    \[\begin{tabular}{c|c|c}
 %   elements & exponent & $\#$ of these type of elements \\
  %  \hline  $\tau_{1}^{j}$ & $0\leq j \leq 2^{n+1}-1$ & $2^{n+1}$ \\
   % \hline $\tau_{2}^{j} $ & $j$ is an odd number & $2^{n}$ \\
    %\hline $\tau_{1}^{j} \tau_{2}$  & $j$ is an odd number & $2^{n}$ \\
   % \hline   &   & $2^{n+2}$ elements
   % \end{tabular}\]

\noindent It is not difficult compute the elements of the group generated by $\tau_{1}$ and $\tau_{2}$. The cardinality of this group is $2^{n+2}$.

\noindent If $c$ is an even number then the group is abelian. It is not difficult prove that the group  $<\tau_{1},\tau_{2}>$ is isomorphic to $\mathbb{Z}/2^{n+1}\mathbb{Z}\times \mathbb{Z}/2\mathbb{Z}$. In this case we call $G_{1}=<\tau_{1},\tau_{2}>$.

\noindent Now if $c$ is an odd number then the group $<\tau_{1},\tau_{2}>$ is isomorphic to $\mathbb{Z}/ 2^{n+1} \mathbb{Z}\rtimes_{h} \mathbb{Z}/ 2 \mathbb{Z}$. In fact,  the element $\nu=\tau_{1}^{2^{n-1}-1}\tau_{2}$ has order $2$ and we may define $h:<\nu> \rightarrow \auto(<\tau_{1}>)$ given by  $h(\nu)(\tau_{1})=\tau_{1}^{2n+1}$.  In this case we call $G_{2}=<\tau_{1},\tau_{2}>$.
\end{proof}

\noindent We remark that for $a=1$ as $c\in \mathbb{Z}$ then   $ n\leq 3$. If $n<3$ then $c$ is an even number and theses cases were studied \cite{gabrub}. If $n=3$ then $c=1$. Furthermore we conclude  that the case (3) in the table 1, theorem \ref{defi}, it has not automorphisms of type $\tau_{j}$ for $j=1,2$.

\noindent For the case (2) in the table 1, theorem \ref{defi}, also it has not automorphisms of type $\tau_{j}$ for $j=1,2$.

\bigskip

\noindent By the preceding theorem for $S_{1}\in \mathfrak{S}_{1}$ we have that the group $\mathbb{Z}/2^{n+1}\mathbb{Z}\times \mathbb{Z}/2\mathbb{Z}\simeq G_{1}$ acts on $S_{1}$. Now we are interest in compute the signature of this action and the signature of yours subgroups. The following theorem summarize this information.

\begin{theo}\label{accionc}
    The  cyclic subgroups of $G_{1}$ acting with fixed points (different) are given as follows:
        \begin{enumerate}
        \item $H_{1}=<\tau_{1}>$ subgroup of order $2^{n+1}$, acting on $S_{1}$ with signature \\ $(0;2^{n+1},2^{n+1},2^{n},2^{n},2^{n})$.
        \item $H_{2}=<\tau_{2}>$ subgroup of order $2^{n+1}$, acting on $S_{1}$ with signature \\ $(0;2^{n+1},2^{n+1},2^{n},2^{n},2^{n})$.
        \item $H_{3}=<\tau_{1}^{2}>$ subgroup of order $2^{n}$, acting on $S_{1}$ with signature \\ $(0;2^{n},2^{n},2^{n},2^{n},2^{n},2^{n},2^{n},2^{n})$.
        \item $H_{4}=<\tau_{1}^{2^{n-1}c-1}\tau_{2}>$ subgroup of order $2$, acting on $S_{1}$ with signature $(2^{n}-1;2^{n+1}\cdot 2)$
        ($\tau_{1}^{2^{n-1}c-1}\tau_{2}$ has $2^{n+1}$ fixed points).
        \end{enumerate}
    Furthermore, we have that the group $G_{1}$ acts on $S_{1}$ with signature $(0;2^{n+1},2^{n+1},2^{n},2)$ and that
    $G_{1}=\auto(S_{1})$, except for finitely many $S_{1}\in \mathfrak{S}_{1}$.
    \end{theo}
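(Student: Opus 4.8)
The plan is to analyse everything through the degree-$2^n$ projection $\pi\colon S_1 \to \widehat{\mathbb C}$, $\pi(x,y)=x$, which by Theorem~\ref{defi} is branched exactly over the eight points $\{0,\infty,\pm1,\pm\lambda,\pm\lambda^{-1}\}$, each with full ramification $2^n$. Since $\tau_1^2(x,y)=(x,\omega_{2^{n+1}}^2 y)$ multiplies $y$ by a primitive $2^n$-th root of unity, the subgroup $H_3=\langle\tau_1^2\rangle$ of order $2^n$ is precisely the deck group of $\pi$; thus $S_1/H_3\cong\widehat{\mathbb C}$ and $H_3$ acts freely and transitively on every fibre over a non-branch point. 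The quotient $G_1/H_3$ has order $4$ and is generated by the images of $\tau_1,\tau_2$, which act on $\widehat{\mathbb C}$ as $x\mapsto-x$ and $x\mapsto 1/x$; hence $G_1/H_3\cong(\mathbb Z/2\mathbb Z)^2$ is the Klein four-group acting by $\{x,-x,1/x,-1/x\}$. Every signature below I would obtain by combining the fixed-point data of $\tau_1,\tau_2$ upstairs (their stabilisers are cyclic by Theorem~\ref{delta}) with this Klein-four action downstairs, confirming each genus with the Riemann--Hurwitz formula.

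For the cyclic subgroups I would argue as follows. An element $\tau_1^k$ with $k$ odd fixes $(x,y)$ only when $x=-x$, so the only fixed points of such elements are $P_0,Q_0$ (over $x=0,\infty$), while an even power $\tau_1^k\in H_3\setminus\{1\}$ fixes exactly the eight points with $y=0$. Thus for $H_1=\langle\tau_1\rangle$ the quotient is $\widehat{\mathbb C}/(x\mapsto-x)\cong\widehat{\mathbb C}$ (coordinate $x^2$): the points $P_0,Q_0$ give two cone points of order $2^{n+1}$, while the pairs $\{\pm1\},\{\pm\lambda\},\{\pm\lambda^{-1}\}$ collapse to three cone points with stabiliser $H_3$, of order $2^n$, yielding $(0;2^{n+1},2^{n+1},2^n,2^n,2^n)$. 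The subgroup $H_2=\langle\tau_2\rangle$ is handled symmetrically with $x\mapsto 1/x$ (fixing $x=\pm1$), giving the same signature, and $H_3$ has signature $(0;2^n,\ldots,2^n)$ with eight entries since its quotient is $\pi$ itself. For $H_4=\langle\nu\rangle$ with $\nu=\tau_1^{2^{n-1}c-1}\tau_2$, I first note that $2^{n-1}c=2a+2$ is even, so $\nu\equiv\tau_1\tau_2\pmod{H_3}$ projects to $x\mapsto -1/x$, whose fixed points $\pm i$ are \emph{not} branch points of $\pi$. Exhibiting one fixed point of $\nu$ over $x=i$ by direct computation, $H_3$-equivariance (valid because $G_1$ is abelian and $H_3$ is transitive on fibres) forces $\nu$ to fix all $2^n$ points over $i$ and all $2^n$ over $-i$, giving exactly $2^{n+1}$ fixed points and, by Riemann--Hurwitz, quotient genus $2^n-1$.

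For the full group I would list the $G_1$-orbits of ramification. The points over branch points of $\pi$ split into three orbits: $\{P_0,Q_0\}$ with stabiliser $H_1$ (order $2^{n+1}$), $\{[1,0,1],[-1,0,1]\}$ with stabiliser $H_2$ (order $2^{n+1}$), and the four points over $\pm\lambda,\pm\lambda^{-1}$ with stabiliser $H_3$ (order $2^n$). The remaining ramification is the single orbit of $2^{n+1}$ points over $\pm i$ with stabiliser $\langle\nu\rangle$ (order $2$). Since $G_1$ is abelian, an element with a fixed point $P$ projects to an element of $(\mathbb Z/2\mathbb Z)^2$ fixing $\pi(P)\in\{0,\infty,\pm1,\pm i\}$, and checking the lifts shows every such element lies in one of $H_1,H_2,H_3,\langle\nu\rangle$; hence these four orbits exhaust the ramification and the signature of $G_1$ is $(0;2^{n+1},2^{n+1},2^n,2)$, which I would double-check against Riemann--Hurwitz for $|G_1|=2^{n+2}$ and $g=3(2^n-1)$.

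Finally, for $G_1=\auto(S_1)$ outside a finite set I would use a rigidity argument. The signature $(0;2^{n+1},2^{n+1},2^n,2)$ has Teichm\"uller dimension $3\gamma-3+r=1$, matching the single modulus $\lambda$, so $\mathfrak S_1$ is a genuine one-parameter family. If $\auto(S_1)=A\supsetneq G_1$ for some member, then $A$ acts with a signature refining that of $G_1$ and $|A|\le 84(g-1)$; there are only finitely many such groups and signatures, and each corresponds to a proper, hence $0$-dimensional, sublocus of $\mathfrak S_1$, so extra automorphisms occur for at most finitely many $\lambda$. \textbf{The main obstacle is exactly this last step}: one must rule out a strictly larger group acting on a positive-dimensional subfamily. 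The cleanest route is to show that $H_3$ is characteristic in $\auto(S_1)$---it should be the unique cyclic subgroup whose quotient is $\widehat{\mathbb C}$ with eight order-$2^n$ cone points---so that $\auto(S_1)$ descends to a group of M\"obius transformations permuting $\{0,\infty,\pm1,\pm\lambda,\pm\lambda^{-1}\}$; for generic $\lambda$ this permutation group is exactly $G_1/H_3$, and only finitely many special $\lambda$ admit extra M\"obius symmetries compatible with the branch data of Theorem~\ref{auto}. Verifying the characteristic property of $H_3$ and the genericity of the branch configuration is the technical heart of the argument.
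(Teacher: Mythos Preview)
Your analysis of the fixed points and signatures for $H_1,H_2,H_3,H_4$ and for $G_1$ is correct and essentially the same as the paper's: both compute the fixed-point sets directly (the paper in homogeneous coordinates, you via the factorisation through $\pi$ and the Klein-four action $G_1/H_3\cong(\mathbb Z/2\mathbb Z)^2$ on $\widehat{\mathbb C}$) and then read off the signatures with Riemann--Hurwitz. Your organisation through the quotient is a clean way to see why exactly these cyclic subgroups act with fixed points, and your $H_3$-equivariance argument for counting the $2^{n+1}$ fixed points of $\nu$ over $x=\pm i$ is a nice touch that the paper leaves implicit.

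Where you diverge from the paper is the final claim $G_1=\auto(S_1)$ for all but finitely many $\lambda$. You are right that your first dimension count is not yet a proof: an overgroup $A\supsetneq G_1$ with signature of the form $(0;a,b,c,d)$ would also have one-dimensional moduli, so nothing forces the corresponding sublocus of $\mathfrak S_1$ to be proper, let alone zero-dimensional. Your proposed fix, proving that $H_3$ is characteristic in $\auto(S_1)$ so that everything descends to M\"obius transformations of the branch set, is a reasonable strategy but, as you acknowledge, still needs to be carried out. The paper bypasses this obstacle entirely by quoting Singerman's classification of finite-index inclusions among Fuchsian signatures: the signature $(0;2^{n+1},2^{n+1},2^n,2)$ is not strictly contained in any other signature of the form $(0;a,b,c,d)$, so any Fuchsian group strictly containing $\Gamma$ must be a triangle group, which is rigid. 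One citation thus replaces both the characteristic-subgroup verification and the genericity analysis of the branch configuration that your route would require.
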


\begin{proof}
    \noindent We will compute the fixed points.

\medskip
    \noindent For  $\tau_{1}$ the fixed points on $S_{1}$ are $\{P_{0},Q_{0}\}$  (recall equation \eqref{ramipoints}).

\medskip
     \noindent For $\tau_{2}$ the fixed points on $S_{1}$ are $\left\{[1,0,1],[-1,0,1]\right\}$. Remark that  $\tau_{2}(P_{0})=Q_{0}$.

\medskip
    \noindent For $\tau_{1}^{j}$ where $j$ is an even number the set of fixed points on $S_{1}$ is $B$ (recall equation \eqref{ramipoints}).

\medskip
    \noindent If $\tau^{j}\tau_{2}$ has fixed point then $j= 3 \cdot 2^{n-1}c-1$ or $j=2^{n-1}c-1$. Since $c$ is an even number  then
    \begin{equation} \label{fixed}
    \tau_{1}^{3 \cdot 2^{n-1}c-1} \tau_{2} = \tau_{1}^{2^{n-1}c-1} \tau_{2}
    \end{equation}
    this is because $3 \cdot 2^{n-1}c-1 \equiv 2^{n-1}c-1 \mod{2^{n+1}}$. In this case the fixed points on $S_{1}$ are
     \begin{equation*}
     \left\{ \begin{array}{cc}
     \text{ } [i,p,1] :& p^{2^{n}}=-(2i)^{a}(1+\lambda^{2})(1+\lambda^{-2}) \\
     \text{ } [-i,q,1] :&  q^{2^{n}}=(2i)^{a}(1+\lambda^{2})(1+\lambda^{-2})
     \end{array}\right\}
     \end{equation*}

\noindent It is not difficult to compute for each $H_{j}$ the signature using  Riemann Hurwitz formula.

\noindent Let $\Gamma$ be a Fuchsian group with the above signature. By Singerman \cite{singerman}, there is no
other Fuchsian group with signature of the form $(0;a,b,c,d)$ that contains it strictly. It
follows that it may only be contained in a triangular signature. Hence by dimension
arguments,  $\Gamma$ cannot be contained strictly in other subgroup as finite index
subgroup except for a finite number of possibilities (up to conjugation by M\"obius
transformations). Therefore, the family of Riemann surfaces does not have any other  automorphisms than those of $G_{1}$, except for finitely many $S_{1} \in \mathfrak{S}_{1}$.

    \end{proof}

\noindent We can now state the analogue of the preceding theorem for $S_{2}\in \mathfrak{S}_{2}$.
    \begin{theo}\label{accionn}
    The  cyclic subgroups of $G_{2}$ acting with fixed points (different) are given as follows:
        \begin{enumerate}
        \item $H_{1}=<\tau_{1}>$ subgroup of order $2^{n+1}$, acting on $S_{2}$ with signature \\ $(0;2^{n+1},2^{n+1},2^{n},2^{n},2^{n})$.
        \item $H_{2}=<\tau_{2}>$ subgroup of order $2^{n+1}$, acting on $S_{2}$ with signature \\ $(0;2^{n+1},2^{n+1},2^{n},2^{n},2^{n})$.
        \item $H_{3}=<\tau_{1}^{2}>$ subgroup of order $2^{n}$, acting on $S_{2}$ with signature \\ $(0;2^{n},2^{n},2^{n},2^{n},2^{n},2^{n},2^{n},2^{n})$.
        \item $H_{4}=<\tau_{1}^{3\cdot2^{n-1}c-1}\tau_{2}>$ subgroup of order $2$, acting on $S_{2}$ with signature
        $(5\cdot 2^{n-2}-1;2^{n}\cdot 2)$
        ($\tau_{1}^{3\cdot 2^{n-1}c-1}\tau_{2}$ has $2^{n}$ fixed points).
        \item $H_{5}=<\tau_{1}^{2^{n-1}c-1}\tau_{2}>$ subgroup of order $2$, acting on $S_{2}$ with signature $(5\cdot2^{n-2}-1;2^{n}\cdot 2)$
        ($\tau_{1}^{2^{n-1}c-1}\tau_{2}$ has $2^{n}$ fixed points).\\
        $H_{5}$ is a subgroup conjugate to $H_{4}$.
        \end{enumerate}
    Furthermore, we have that the group $G_{2}$ acts on $S_{2}$ with signature $(0;2^{n+1},2^{n+1},2^{n},2)$ and that
    $G_{2}=\auto(S_{2})$, except for finitely many $S_{2}\in \mathfrak{S}_{2}$.
    \end{theo}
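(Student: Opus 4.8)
The plan is to follow the structure of the proof of Theorem~\ref{accionc}, reusing essentially verbatim the computations for $H_1,H_2,H_3$ and isolating the genuinely new phenomena caused by $c$ being odd, which occur only for the order--two subgroups and in the appearance of a cone point of order $2$ in the signature of $G_2$.

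First I would record that, exactly as in the abelian case, $H_3=\langle\tau_1^2\rangle=\langle\tau_2^2\rangle$ is the (central) deck group of $\pi:S_2\to\widehat{\mathbb{C}}$, $(x,y)\mapsto x$, of order $2^n$, whose eight branch points form the set $B$ of \eqref{ramipoints}, each fully ramified; Riemann--Hurwitz then gives signature (3). Since $\tau_1$ and $\tau_2$ induce on $S_2/H_3=\widehat{\mathbb{C}}$ the involutions $x\mapsto -x$ and $x\mapsto 1/x$ respectively, I would compute the orbits of $B$ and the stabilizers: $\tau_1$ fixes $P_0,Q_0$ with full order $2^{n+1}$ while identifying $\{[\pm1,0,1]\}$, $\{[\pm\lambda,0,1]\}$, $\{[\pm\lambda^{-1},0,1]\}$ into three cone points of order $2^n$, giving (1); the symmetric computation with $x\mapsto 1/x$ gives (2). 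Each signature is then checked against Riemann--Hurwitz with $g=3(2^n-1)$.

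The heart of the proof is the analysis of the order--two subgroups. Using $\tau_1^j\tau_2(x,y)=\bigl((-1)^j/x,\ \omega_{2^{n+1}}^{\,j+1}y/x^c\bigr)$ together with $\tau_2\tau_1=\tau_1^{2^n+1}\tau_2$ and $\tau_2^2=\tau_1^2$ from Theorem~\ref{auto}, I would compute
\[
(\tau_1^j\tau_2)^2=\tau_1^{\,j(2^n+2)+2},
\]
so that $\tau_1^j\tau_2$ is an involution exactly when $j(2^n+2)+2\equiv0\pmod{2^{n+1}}$, which excludes $j$ even and forces $j$ odd with $j\equiv 2^{n-1}-1\pmod{2^n}$. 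Modulo $2^{n+1}$ this yields the two values $2^{n-1}c-1$ and $3\cdot2^{n-1}c-1$, which are distinct \emph{precisely because $c$ is odd} (their difference is $2^nc\equiv 2^n$). Since $j$ is odd, a fixed point forces $(-1)^j/x=x$, i.e.\ $x=\pm i$ (which lie outside $\pi(B)$, so their fibres consist of $2^n$ points with $y\neq0$); the map carries each fibre over $x=i$ (resp.\ $x=-i$) to itself by $y\mapsto\beta_{\pm}y$ with $\beta_+=\omega_{2^{n+1}}^{\,j+1}/i^c$ and $\beta_-=\omega_{2^{n+1}}^{\,j+1}/(-i)^c$. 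The decisive point is that $c$ odd gives $\beta_-=-\beta_+$ and $\beta_\pm^2=1$, so exactly one of the two fibres is fixed pointwise: the involution has exactly $2^n$ fixed points (in contrast to the $2^{n+1}$ of the abelian case, where $c$ even forces $\beta_+=\beta_-$). Riemann--Hurwitz for the degree--two quotient then gives quotient genus $(6\cdot2^n-4-2^n)/4=5\cdot2^{n-2}-1$ and signature $(5\cdot2^{n-2}-1;\,2^n\cdot2)$, proving (4) and (5). For the conjugacy in (5) I would note that conjugation by $\tau_1$ sends $\tau_1^j\tau_2$ to $\tau_1^{\,j-2^n}\tau_2$, and that $2^{n-1}c-1-2^n\equiv3\cdot2^{n-1}c-1\pmod{2^{n+1}}$, whence $\tau_1H_5\tau_1^{-1}=H_4$.

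Finally, for the signature of $G_2$ I would use that $H_3$ is central of order $2^n$ with $G_2/H_3\cong(\mathbb{Z}/2)^2$ acting on $\widehat{\mathbb{C}}$ as $\{x,-x,1/x,-1/x\}$, and classify the orbits carrying a nontrivial stabilizer: $\{0,\infty\}$ and $\{1,-1\}$ lift to stabilizers $\langle\tau_1\rangle$, $\langle\tau_2\rangle$ of order $2^{n+1}$; the orbit $\{\pm\lambda,\pm\lambda^{-1}\}$, lying in the branch locus $B$, has lifted stabilizer $H_3$ of order $2^n$; and the orbit $\{i,-i\}$, fixed by $x\mapsto -1/x$ but not in $B$, lifts to the order--two involutions above, contributing the cone point of order $2$. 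This yields $(0;2^{n+1},2^{n+1},2^n,2)$, once more confirmed by Riemann--Hurwitz. The maximality statement $G_2=\auto(S_2)$ for all but finitely many $S_2$ then follows exactly as in Theorem~\ref{accionc}: by Singerman's classification \cite{singerman} a four--cone signature of this shape is not that of a proper subgroup of a non--triangular Fuchsian group, and a dimension count over the one--parameter family $\mathfrak{S}_2$ excludes extra automorphisms except for finitely many $\lambda$. I expect the main obstacle to be the fixed--point count of the involutions, since this is where the semidirect structure genuinely departs from the abelian case; once the sign relation $\beta_-=-\beta_+$ is established, the remaining signatures are routine Riemann--Hurwitz bookkeeping.
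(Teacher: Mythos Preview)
Your proposal is correct and follows essentially the same approach as the paper, which simply refers back to Theorem~\ref{accionc} and notes that with $c$ odd equation~\eqref{fixed} fails, producing two (conjugate) order--two subgroups each with $2^{n}$ fixed points. You supply considerably more detail than the paper does---the explicit computation of $(\tau_1^{j}\tau_2)^{2}$, the $\beta_{\pm}$ argument pinning down the $2^{n}$ fixed points, the conjugation $\tau_1 H_5\tau_1^{-1}=H_4$, and the tower $S_2\to\widehat{\mathbb C}\to\widehat{\mathbb C}/(\mathbb Z/2)^2$ for the signature of $G_2$---but the strategy is the same.
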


\noindent The proof is similar to that  theorem  \ref{accionc}. Recall that in this case  $c$ is an odd number. Then the equation \eqref{fixed} is not true. Therefore there exist two subgroups  of order $2$ that it has fixed points. It is not difficult prove that these groups are conjugate. For any these case it has $2^{n}$ fixed points and the same signature.

\bigskip

\noindent For all $P$  fixed point given by the theorems \ref{accionc} and \ref{accionn} we may compute the value of $\delta_{P}$.
\noindent According the remarks of  the theorem \ref{auto} we have just $3$ cases to compute.  In the table 1 ( theorem \ref{defi}) they are: case (1), case (4) and case (5). Remark that the case (6) was study in \cite{gabrub}.

\bigskip

    \noindent Case (1).  $n=3$\, , $a=1$\, , $c=1$
    \[\begin{tabular}{c|c|c|c}
    order & $\tau$ & $P$ & $\delta_{P}(\tau)$ \\
    \hline $16$ & $\tau_{1}$ & $[0,0,1]$ & $\omega_{16}$ \\
    \hline $16$ & $\tau_{1}$ & $[1,0,0]$ & $\omega_{16}^{9}$ \\
    \hline $16$ & $\tau_{1}^{9}$ & $[1,0,0]$ & $\omega_{16}$ \\
    \hline $16$ & $\tau_{2}$ & $[1,0,1]$ & $\omega_{16}$ \\
    \hline $16$ & $\tau_{2}$ & $[-1,0,1]$ & $\omega_{16}^{9}$ \\
    \hline $16$ & $\tau_{2}^{9}$ & $[-1,0,1]$ & $\omega_{16}$\\
    \hline $8$ & $\tau_{1}^{2}$ &  $[\lambda,0,1]$ & $\omega_{8}$\\
    \hline $8$ & $\tau_{1}^{2}$ &  $[-\lambda,0,1]$ & $\omega_{8}$ \\
    \hline $8$ & $\tau_{1}^{2}$ &  $[\lambda^{-1},0,1]$  & $\omega_{8}$ \\
    \hline $8$ & $\tau_{1}^{2}$ &  $[-\lambda^{-1},0,1]$  & $\omega_{8}$ \\
    \hline
    \end{tabular}\]

\bigskip
    \noindent Case (4). $2^{n}-3a-4>0\, ,$ $a> 1\, ,$  $2^{n}-3a-5\neq 0$
    \[\begin{tabular}{c|c|c|c|c}
    order &  $\tau$  & $P$ &   local auto. &  $\delta_{P}(\tau)$ \\
    \hline $2^{n+1}$ & $\tau_{1}$ &  $[0,0,1]$ &
    { \footnotesize \begin{tabular}{c}
    $s \rightsquigarrow \omega_{2^{n+1}}^{j_{0}}s$ \\
    $j_{0}a \equiv 1 \mod 2^{n+1}$ \\
    $j_{0}$ is an odd number
    \end{tabular}}
    & $\omega_{2^{n+1}}^{j_{0}}$\\
    \hline $2^{n+1}$ & $\tau_{1}^{a}$ &  $[0,0,1]$ & & $\omega_{2^{n+1}}$ \\
    \hline $2^{n+1}$ & $\tau_{1}$ &  $[1,0,0]$ &
     {\footnotesize\begin{tabular}{c}
    $t \rightsquigarrow \omega_{2^{n+1}}^{j_{\infty}}t$ \\
    $j_{\infty}(-3a-4) \equiv 1 \mod 2^{n+1}$ \\
    $j_{\infty}$ is an odd number
    \end{tabular}}
    & $\omega_{2^{n+1}}^{j_{\infty}}$\\
    \hline $2^{n+1}$ & $\tau_{1}^{-3a-4}$ &  $[1,0,0]$ & & $\omega_{2^{n+1}}$ \\
    \hline $2^{n+1}$ & $\tau_{2}$ &  $[1,0,1]$ &
     {\footnotesize\begin{tabular}{c}
    $s \rightsquigarrow \omega_{2^{n+1}}^{j_{1}}s$ \\
    $j_{1}a \equiv 1 \mod 2^{n+1}$ \\
    $j_{1}$ is an odd number
    \end{tabular}}
    & $\omega_{2^{n+1}}^{j_{1}}$\\
    \hline $2^{n+1}$ & $\tau_{2}^{a}$ &  $[1,0,1]$ & & $\omega_{2^{n+1}}$ \\
    \hline $2^{n+1}$ & $\tau_{2}$ &  $[-1,0,1]$ &
    {\footnotesize \begin{tabular}{c}
    $s \rightsquigarrow \omega_{2^{n+1}}^{j_{-1}}s$ \\
    $j_{-1}a \equiv  1 \mod 2^{n+1}$ \\
    $j_{-1}$ is an odd number
    \end{tabular}}
     & \begin{tabular}{c}
     $\omega_{2^{n+1}}^{j_{-1}}$\\
     {\footnotesize $c$ is an even number}
    \end{tabular}\\
    \hline
    \end{tabular}\]

    \[\begin{tabular}{c|c|c|c|c}
    order &  $\tau$  & $P$ &   local auto. &  $\delta_{P}(\tau)$ \\
    \hline $2^{n+1}$ & $\tau_{2}^{a}$ &  $[-1,0,1]$ & & \begin{tabular}{c}
     $\omega_{2^{n+1}}$\\
     {\footnotesize $c$ is an even number}
    \end{tabular}\\
    \hline $2^{n+1}$ & $\tau_{2}$ &  $[-1,0,1]$ &
     {\footnotesize\begin{tabular}{c}
    $s \rightsquigarrow \omega_{2^{n+1}}^{j_{-1}}s$ \\
    $j_{-1}a \equiv  2^{n}+1 \mod 2^{n+1}$  \\
    $j_{-1}$ is an odd number
    \end{tabular}}
    & \begin{tabular}{c}
    $\omega_{2^{n+1}}^{j_{-1}}$\\
    {\footnotesize $c$ is an odd number}
    \end{tabular}\\
    \hline $2^{n+1}$ & $\tau_{2}^{a}$ &  $[-1,0,1]$ & & \begin{tabular}{c}
     $\omega_{2^{n+1}}^{2^{n}+1}$\\
     {\footnotesize $c$ is an odd number}
    \end{tabular}\\
    \hline $2^{n+1}$ & $\tau_{2}^{a(2^{n}+1)}$ &  $[-1,0,1]$ & & \begin{tabular}{c}
     $\omega_{2^{n+1}}$\\
     {\footnotesize $c$ is an odd number}
    \end{tabular}\\
    \hline $2^{n}$ & $\tau_{1}^{2}$ &  $[\lambda,0,1]$ &
    $y \rightsquigarrow \omega_{2^{n+1}}^{2}y$ & $\omega_{2^{n}}$\\
    \hline $2^{n}$ & $\tau_{1}^{2}$ &  $[-\lambda,0,1]$ &
    $y \rightsquigarrow \omega_{2^{n+1}}^{2}y$ & $\omega_{2^{n}}$ \\
    \hline $2^{n}$ & $\tau_{1}^{2}$ &  $[\lambda^{-1},0,1]$ &
    $y \rightsquigarrow \omega_{2^{n+1}}^{2}y$ & $\omega_{2^{n}}$ \\
    \hline $2^{n}$ & $\tau_{1}^{2}$ &  $[-\lambda^{-1},0,1]$ &
    $y \rightsquigarrow \omega_{2^{n+1}}^{2}y$ & $\omega_{2^{n}}$ \\
    \hline
    \end{tabular}\]

    \bigskip
\noindent Case (5). $2^{n}-3a-4<0\, ,$ $a> 1$
    \[\begin{tabular}{c|c|c|c}
    order &  $\tau$  & $P$ &  $\delta_{P}(\tau)$ \\
    \hline $2^{n+1}$ & $\tau_{1}$ &  $[0,0,1]$
    & $\omega_{2^{n+1}}^{j_{0}}$\\
    \hline $2^{n+1}$ & $\tau_{1}^{a}$ &  $[0,0,1]$ & $\omega_{2^{n+1}}$ \\
    \hline $2^{n+1}$ & $\tau_{1}$ &  $[0,1,0]$
    & $\omega_{2^{n+1}}^{j_{\infty}}$\\
    \hline $2^{n+1}$ & $\tau_{1}^{-3a-4}$ &  $[0,1,0]$  & $\omega_{2^{n+1}}$ \\
    \hline $2^{n+1}$ & $\tau_{2}$ &  $[1,0,1]$
    & $\omega_{2^{n+1}}^{j_{1}}$\\
    \hline $2^{n+1}$ & $\tau_{2}^{a}$ &  $[1,0,1]$  & $\omega_{2^{n+1}}$ \\
    \hline $2^{n+1}$ & $\tau_{2}$ &  $[-1,0,1]$
     & \begin{tabular}{c}
     $\omega_{2_{n+1}}^{j_{-1}}$\\
     {\footnotesize $c$ is an even number}
    \end{tabular}\\
    \hline $2^{n+1}$ & $\tau_{2}^{a}$ &  $[-1,0,1]$  & \begin{tabular}{c}
     $\omega_{2_{n+1}}$\\
     {\footnotesize $c$ is an even number}
    \end{tabular}\\
    \hline $2^{n+1}$ & $\tau_{2}$ &  $[-1,0,1]$
    & \begin{tabular}{c}
    $\omega_{2^{n+1}}^{j_{-1}}$\\
    {\footnotesize $c$ is an odd number}
    \end{tabular}\\
    \hline $2^{n+1}$ & $\tau_{2}^{a}$ &  $[-1,0,1]$  & \begin{tabular}{c}
     $\omega_{2_{n+1}}^{2^{n}+1}$\\
     {\footnotesize $c$ is an odd number}
    \end{tabular}\\
    \hline $2^{n+1}$ & $\tau_{2}^{a(2^{n}+1)}$ &  $[-1,0,1]$  & \begin{tabular}{c}
     $\omega_{2_{n+1}}$\\
     {\footnotesize $c$ is an odd number}
    \end{tabular}\\
    \hline $2^{n}$ & $\tau_{1}^{2}$ &  $[\lambda,0,1]$  & $\omega_{2^{n}}$\\
    \hline $2^{n}$ & $\tau_{1}^{2}$ &  $[-\lambda,0,1]$  & $\omega_{2^{n}}$ \\
    \hline $2^{n}$ & $\tau_{1}^{2}$ &  $[\lambda^{-1},0,1]$  & $\omega_{2^{n}}$ \\
    \hline $2^{n}$ & $\tau_{1}^{2}$ &  $[-\lambda^{-1},0,1]$  & $\omega_{2^{n}}$ \\
    \hline
    \end{tabular}\]

 \subsection{Geometric presentation}
\noindent  We have showed that, for each $S_{1}\in \mathfrak{S}_{1}$, the group
    $$G_{1}\simeq \mathbb{Z}/2^{n+1}\mathbb{Z} \times \mathbb{Z}/2\mathbb{Z}\, $$
acts on $S_{1}$ with signature $(0;2^{n+1},2^{n+1},2^{n},2)$.
\noindent For this action, we have that the 4--tuple
    $$(\tau_{1}^{a},\tau_{2}^{a},\tau_{1}^{2},\tau_{1}^{2^{n-1}c-1}\tau_{2})\, $$
is a generating vector. Using the polygon method \cite{brr}, according to signature,
we find a presentation for $G_{1}$. We thus obtain the polygon
\[\begin{tabular}{c}
 \includegraphics[height=5 cm]{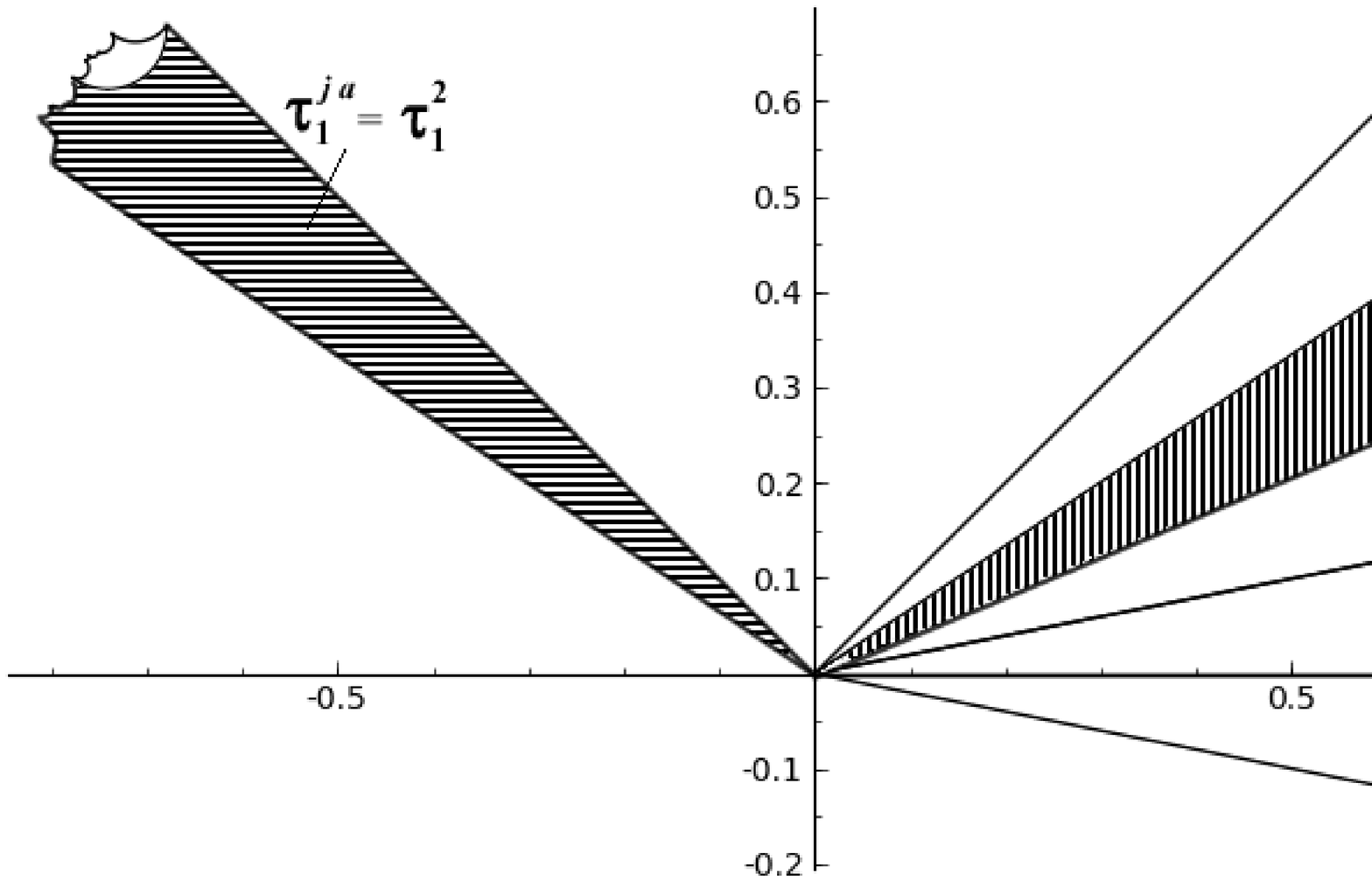}
\end{tabular}\]

\noindent We call $D_{1}\,, D_{2}\,,D_{3}$ the elements associated respectively to $\tau_{1}^{a},\, \tau_{2}^{a},\, \tau_{1}^{2}$. From the picture we conclude the relationships
\begin{eqnarray*}
 D_{3}D_{1}^{4}D_{3}&=&1, \qquad \text{grid} \\
 D_{2}D_{1}D_{2}^{-1}D_{1}^{-1}&=&1, \qquad \text{dots} \\
 D_{1}^{2}D_{2}^{-2}&=&1, \qquad \text{vertical lines} \\
 D_{1}^{j}D_{3}^{-1}&=&1, \qquad \text{horizontal lines}
\end{eqnarray*}

\noindent The reader should keep in mind that we are in the case where  $c$ is an even number and  $2^{n-1}c=2a+2$.
It is not difficult to  verify that $j=2^{n-1}c-2$.

\noindent The following proposition is a consequence.

\begin{prop}
$G_{1}$ has a presentation of the form
\begin{equation} \label{group2n}
\left<D_{1},D_{2},D_{3},D_{4}:\begin{array}{c}
D_{1}^{2^{n+1}} = D_{2}^{2^{n+1}}=D_{3}^{2^{n}}=D_{4}^{2}=1,\\
 D_{1}D_{2}D_{3}D_{4}=1,\quad  D_{1}^{2}D_{2}^{-2}=1, \quad D_{1}^{c2^{n-1}-2}D_{3}^{-1}=1
\end{array}
\right>\, ,
\end{equation}
where $c$ is an even number.
\end{prop}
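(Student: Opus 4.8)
The plan is to realize $G_{1}$ as a quotient of the abstract group $\Pi$ defined by presentation \eqref{group2n}, and then to count. By Theorem~\ref{accionc} the group $G_{1}$ acts with signature $(0;2^{n+1},2^{n+1},2^{n},2)$ through the generating vector $(\tau_{1}^{a},\tau_{2}^{a},\tau_{1}^{2},\tau_{1}^{2^{n-1}c-1}\tau_{2})$; setting $D_{1}=\tau_{1}^{a}$, $D_{2}=\tau_{2}^{a}$, $D_{3}=\tau_{1}^{2}$ and $D_{4}=\tau_{1}^{2^{n-1}c-1}\tau_{2}$, these four elements generate $G_{1}$ and satisfy the order relations $D_{1}^{2^{n+1}}=D_{2}^{2^{n+1}}=D_{3}^{2^{n}}=D_{4}^{2}=1$ together with the product relation $D_{1}D_{2}D_{3}D_{4}=1$ coming from the standard Fuchsian presentation attached to the signature. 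The two remaining relations are exactly those read off the polygon of \cite{brr} along its vertical and horizontal lines. So the first task is to check that every relation of \eqref{group2n} actually holds among the $D_{j}$ in $G_{1}$; the second is to show that $\Pi$ has order at most $2^{n+2}$. Since $|G_{1}|=2^{n+2}$ by Theorem~\ref{auto}, this forces the natural epimorphism $\Pi\to G_{1}$ to be an isomorphism.

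First I would verify the relations in $G_{1}$ by direct substitution, using the two identities $\tau_{1}^{2}=\tau_{2}^{2}$ (Theorem~\ref{auto}) and $c\,2^{n-1}=2a+2$. The relation $D_{1}^{2}D_{2}^{-2}=1$ is immediate, since $D_{1}^{2}D_{2}^{-2}=(\tau_{1}^{2})^{a}(\tau_{2}^{2})^{-a}=1$. For $D_{1}^{c2^{n-1}-2}D_{3}^{-1}=1$ one computes $D_{1}^{c2^{n-1}-2}D_{3}^{-1}=\tau_{1}^{a(2a)-2}=\tau_{1}^{2(a^{2}-1)}$, so the relation amounts to $a^{2}\equiv 1 \pmod{2^{n}}$; because $c$ is even we have $a\equiv -1 \pmod{2^{n-1}}$, and squaring gives $a^{2}\equiv 1 \pmod{2^{n}}$ for $n\ge 2$. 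The product relation is checked the same way, collapsing $\tau_{2}^{a+1}$ to $\tau_{1}^{a+1}$ via $\tau_{1}^{2}=\tau_{2}^{2}$ and using $a+1\equiv 0 \pmod{2^{n-1}}$. All of this is routine.

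The heart of the argument, and the step I expect to be the main obstacle, is the order bound $|\Pi|\le 2^{n+2}$, because the presentation \eqref{group2n} contains \emph{no} explicit commutativity relation and one must derive $[D_{1},D_{2}]=1$ from what is listed. I would argue inside $\Pi$ as follows. Using the last two relations one eliminates $D_{3}=D_{1}^{c2^{n-1}-2}$ and $D_{4}=D_{3}^{-1}D_{2}^{-1}D_{1}^{-1}$, so $\Pi=\langle D_{1},D_{2}\rangle$. Put $z=D_{1}^{2}$; the relation $D_{1}^{2}D_{2}^{-2}=1$ gives $z=D_{2}^{2}$, so $z$ is simultaneously a power of $D_{1}$ and of $D_{2}$, hence commutes with both generators and is central in $\Pi$. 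Since $D_{1}^{2^{n+1}}=1$, the order of $z$ divides $2^{n}$. In the quotient $\Pi/\langle z\rangle$ both generators square to the identity, so it is a quotient of the infinite dihedral group; translating $D_{4}^{2}=1$ into $\Pi/\langle z\rangle$ (where $D_{1}^{-2a}\equiv 1$) yields $\bar D_{2}\bar D_{1}\bar D_{2}=\bar D_{1}$, which in the presence of $\bar D_{2}^{2}=1$ says precisely that $\bar D_{1}$ and $\bar D_{2}$ commute. Hence $\Pi/\langle z\rangle$ is the Klein four-group, of order at most $4$, and $|\Pi|\le 4\cdot 2^{n}=2^{n+2}$.

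Combining the two parts, the epimorphism $\Pi\to G_{1}$ between groups of the same finite order $2^{n+2}$ is an isomorphism, which is the assertion of the proposition. The only genuinely delicate points are the translation of $D_{4}^{2}=1$ through the central quotient (this is what replaces the missing commutativity relation) and the bookkeeping of the exponent $j=2^{n-1}c-2=2a$ read off the horizontal lines of the polygon; everything else is substitution.
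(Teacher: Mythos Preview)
Your argument is correct and follows the same two-step strategy as the paper: first bound the order of the abstract group $\Pi$ by $2^{n+2}$ using that $z=D_{1}^{2}=D_{2}^{2}$ is central together with $D_{4}^{2}=1$ to force commutativity of $D_{1}$ and $D_{2}$, then compare with $|G_{1}|=2^{n+2}$ via an explicit epimorphism. The only cosmetic difference is that the paper sends $D_{1}\mapsto\tau_{1}$, $D_{2}\mapsto\tau_{2}$ rather than $D_{1}\mapsto\tau_{1}^{a}$, $D_{2}\mapsto\tau_{2}^{a}$ (both work since $a$ is odd), and your central-quotient derivation of $[D_{1},D_{2}]=1$ is more explicit than the paper's one-line remark; otherwise the proofs coincide.
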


\begin{proof}
\noindent Call $\widetilde{G}_{1}$ to a group with presentation \eqref{group2n}.

\noindent First we may see that $D_{1}D_{2}=D_{2}D_{1}$. This is because $D_{1}^{2}=D_{2}^{2}$ and $D_{3}=D_{1}^{c2^{n-1}-2}$ then $D_{4}=D_{1}^{c2^{n-1}-1}D_{2}$ and it has order $2$.

\noindent Now we have
    $$\widetilde{G}_{1}=\{D_{1}^{j}D_{2}^{i}: 0\leq j<2^{n+1}, 0 \leq i\leq 1\}$$

\noindent  Then $\widetilde{G}_{1}$ is an abelian group of order $2^{n+2}$. Furthermore $\widetilde{G}_{1}$ is isomorphic to $G_{1}$. This is because we may define an isomorphism $\Phi$ given by $\Phi(D_{1})=\tau_{1}$ and $\Phi(D_{2})=\tau_{2}$.
\end{proof}

\bigskip

\noindent Also we have showed that, for each $S_{2}\in \mathfrak{S}_{2}$, the group
    $$G_{2}\simeq\mathbb{Z}/2^{n+1}\mathbb{Z} \rtimes_{h} \mathbb{Z}/2\mathbb{Z}\, $$
acts on $S_{2}$ with signature $(0;2^{n+1},2^{n+1},2^{n},2)$.
\noindent For this action, we have that the 4--tuple
    $$(\tau_{1}^{a},\tau_{2}^{a},\tau_{1}^{2},\tau_{1}^{2^{n-1}c-1}\tau_{2})\, $$
is a generating vector. As before, using the polygon method \cite{brr}, according to signature,
we find a presentation for $G_{2}$. We thus obtain the polygon

\[\begin{tabular}{c}
 \includegraphics[height=5 cm]{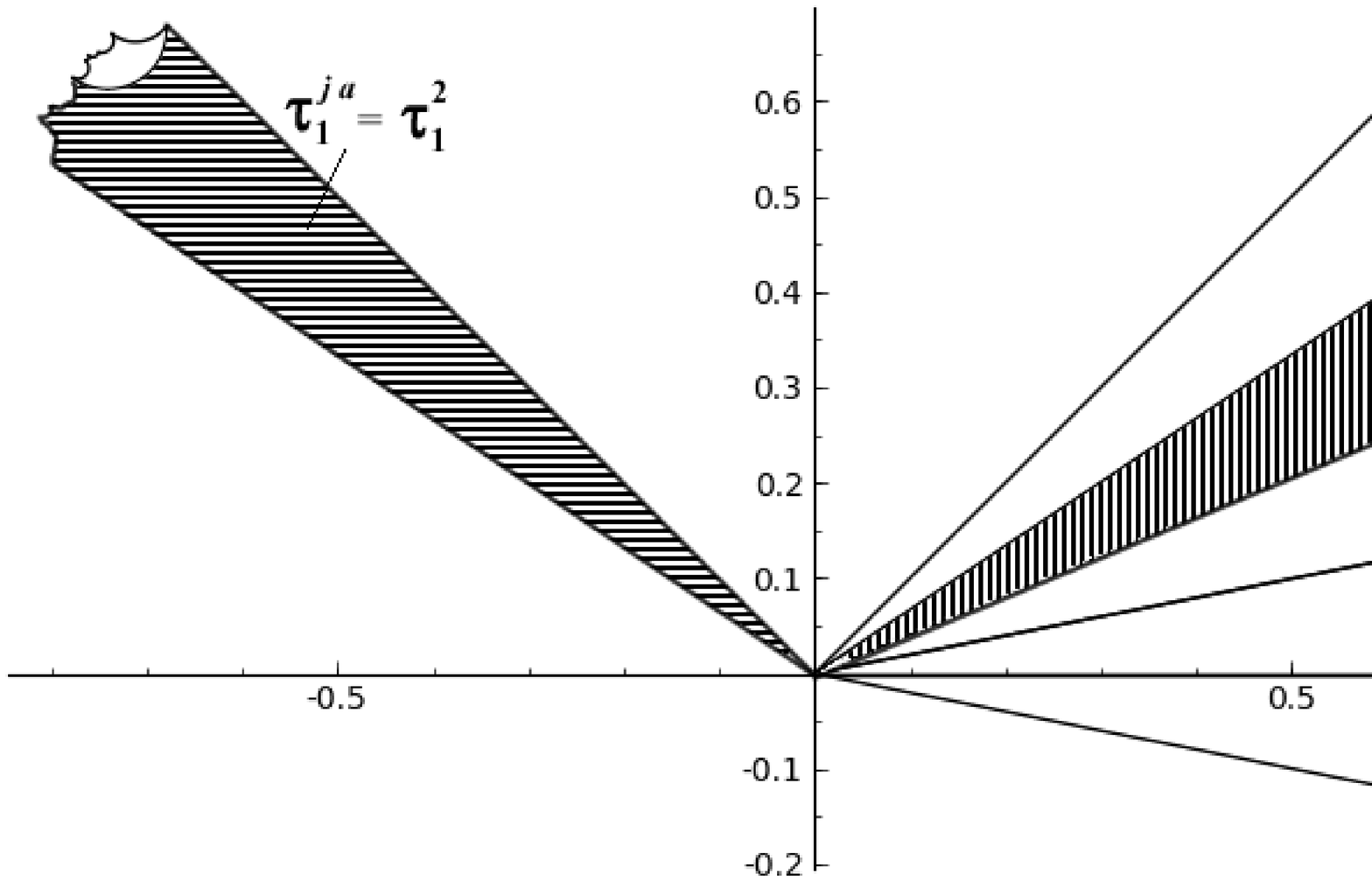}
\end{tabular}\]

\noindent We call $D_{1}\,, D_{2}\,,D_{3}$ the elements associated respectively to $\tau_{1}^{a},\, \tau_{2}^{a},\, \tau_{1}^{2}$. From the picture we conclude the relationships
\begin{eqnarray*}
 D_{1}^{2}D_{2}^{-2}&=&1, \qquad \text{vertical lines} \\
 D_{1}^{j}D_{3}^{-1}&=&1, \qquad \text{horizontal lines}
\end{eqnarray*}

\noindent The reader should keep in mind that we are in the case where  $c$ is an odd number and  $2^{n-1}c=2a+2$.
It is not difficult to  verify that $j=3\cdot 2^{n-1}c-2$, for $n>3$. For $n=3$ we have $a=1$ hence $j=2$.

\noindent The following proposition is a consequence.

\begin{prop}
For $n>3$, $G_{2}$ has a presentation of the form
\begin{equation}\label{exnon}
\left<D_{1},D_{2},D_{3},D_{4}:\begin{array}{c}
D_{1}^{2^{n+1}} = D_{2}^{2^{n+1}}=D_{3}^{2^{n}}=D_{4}^{2}=1,\\
D_{1}D_{2}D_{3}D_{4}=1,\quad  D_{1}^{2}D_{2}^{-2}=1, \quad D_{1}^{3\cdot 2^{n-1}c-2}D_{3}^{-1}=1
\end{array}
\right>\, .
\end{equation}
\end{prop}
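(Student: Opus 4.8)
The plan is to mirror the previous proposition: let $\widetilde{G}_{2}$ denote the abstract group defined by the presentation \eqref{exnon} and produce an isomorphism $\Phi:\widetilde{G}_{2}\to G_{2}$, concluding by a cardinality count. Write $j=3\cdot 2^{n-1}c-2$ for the exponent appearing in the horizontal relation. First I would eliminate the redundant generators: the horizontal relation gives $D_{3}=D_{1}^{j}$, and $D_{1}D_{2}D_{3}D_{4}=1$ gives $D_{4}=D_{3}^{-1}D_{2}^{-1}D_{1}^{-1}$, so that $\widetilde{G}_{2}=\langle D_{1},D_{2}\rangle$.

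Next I would extract a normal form. Substituting $D_{3}=D_{1}^{j}$ into $D_{4}^{2}=1$ turns it into $(D_{1}D_{2}D_{1}^{j})^{2}=1$, i.e.\ $D_{2}D_{1}^{j+1}D_{2}=D_{1}^{-(j+1)}$. Since $n>3$ the integer $j$ is even, so $j+1$ is a unit modulo $2^{n+1}$; combining this identity with the vertical relation $D_{2}^{2}=D_{1}^{2}$ shows that $D_{2}D_{1}D_{2}^{-1}$ is a power of $D_{1}$ (in fact $D_{2}D_{1}D_{2}^{-1}=D_{1}^{2^{n}+1}$). Hence $\langle D_{1}\rangle$ is normal in $\widetilde{G}_{2}$, every element can be written as $D_{1}^{i}D_{2}^{\varepsilon}$ with $0\le i<2^{n+1}$ and $\varepsilon\in\{0,1\}$, and therefore $|\widetilde{G}_{2}|\le 2^{n+2}$.

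To finish, I would set $\Phi(D_{1})=\tau_{1}$ and $\Phi(D_{2})=\tau_{2}$ and verify that the relations of \eqref{exnon} hold for $\tau_{1},\tau_{2}$ in $G_{2}$. The order relations and $\tau_{1}^{2}=\tau_{2}^{2}$ are immediate from Theorem~\ref{auto}, so the only substantial point is $D_{4}^{2}=1$, which under $\Phi$ becomes $(\tau_{1}\tau_{2}\tau_{1}^{j})^{2}=1$. Using the twist $\tau_{2}\tau_{1}=\tau_{1}^{2^{n}+1}\tau_{2}$ one rewrites $\tau_{1}\tau_{2}\tau_{1}^{j}=\tau_{1}^{1+j}\tau_{2}$ (here $j$ even makes $j\,2^{n}\equiv 0 \bmod 2^{n+1}$), and then $(\tau_{1}^{1+j}\tau_{2})^{2}=\tau_{1}^{(1+j)(2^{n}+2)+2}$; this equals the identity precisely when $j\equiv 2^{n-1}-2 \pmod{2^{n}}$, a congruence that indeed holds for $j=3\cdot 2^{n-1}c-2$ with $c$ odd. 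Thus $\Phi$ is a well-defined homomorphism, and it is onto because $\tau_{1},\tau_{2}$ generate $G_{2}$. Since $|G_{2}|=2^{n+2}\ge|\widetilde{G}_{2}|$, surjectivity forces $\Phi$ to be an isomorphism.

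The hard part will be this last congruence: unlike the abelian case, the reduction of $D_{4}^{2}=1$ must be carried out inside the twisted product, so the value $j=3\cdot 2^{n-1}c-2$ read off from the polygon is exactly what makes $(\tau_{1}\tau_{2}\tau_{1}^{j})^{2}$ collapse. I would take care with the bookkeeping modulo $2^{n+1}$, since that is where the odd parity of $c$ --- equivalently the non-abelian structure $\tau_{2}\tau_{1}=\tau_{1}^{2^{n}+1}\tau_{2}$ --- actually enters.
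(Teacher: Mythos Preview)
Your proposal is correct and follows essentially the same strategy as the paper: name the abstract group $\widetilde{G}_{2}$, derive the twist relation $D_{2}D_{1}=D_{1}^{2^{n}+1}D_{2}$ from the defining relations, and then send $D_{1}\mapsto\tau_{1}$, $D_{2}\mapsto\tau_{2}$. Your write-up is in fact more complete than the paper's --- you explicitly bound $|\widetilde{G}_{2}|$ and verify that the image of $D_{4}$ squares to the identity via the congruence $j\equiv 2^{n-1}-2\pmod{2^{n}}$, whereas the paper records only the twist computation and then asserts the isomorphism.
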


\begin{proof}
\noindent Call $\widetilde{G}_{2}$ to a group with presentation \eqref{exnon}.

\noindent First we will see that $D_{2}D_{1}=D_{1}^{2^{n}+1}D_{2}$. In fact as $D_{4}$ has order $2$ we have  $D_{4}=D_{1}D_{2}D_{3}$ then
     $$ D_{2}D_{1}=D_{1}^{3-3\cdot 2^{n}c}D_{2}^{-1}=D_{1}^{1-3\cdot 2^{n}c}D_{2}$$
but as $c$ is an odd number we have $1-3\cdot 2^{n}c \equiv 2^{n}+1 \, \mod 2^{n+1}\, .$

\noindent Therefore we may define an isomorphism $\Phi: \widetilde{G}_{2}  \rightarrow G_{2} $ given by $\Phi(D_{1})=\tau_{1}$ and $\Phi(D_{2})=\tau_{2}$.
\end{proof}

\subsection{Classification of actions on $\mathfrak{S}_{1}$ and $\mathfrak{S}_{2}$}

\noindent In this section we will study the epimorphisms associated to the actions defined in the previous section.
 From now on we consider the notations as in theorem \ref{existence2}.
Recall that in any case (abelian and no abelian) $H_{1}=<\tau_{1}>$ is a subgroup of $\auto(S_{i})$ for $i=1,2$, and acts with signature
$(0;2^{n+1},2^{n+1},2^{n},2^{n},2^{n})$. Then there exist a Fuchsian group $\Gamma_{1,i}$ with signature  $(0;2^{n+1},2^{n+1},2^{n},2^{n},2^{n})$ and an epimorphism $\theta_{1,i}:\Gamma_{1,i} \rightarrow H_{1}$. We consider for $\Gamma_{1,i}$ a presentation according to your  hyperbolic polygon associated
$$\Gamma_{1,i}=\left<x_{1,i},\cdots,x_{5,i}: x_{1,i}^{2^{n+1}}=x_{2,i}^{2^{n+1}}=x_{3,i}^{2^{n}}=x_{4,i}^{2^{n}}=x_{5,i}^{2^{n}}=1= x_{1,i}x_{2,i}x_{3,i}x_{4,i}x_{5,i}\right>$$
where for each $j=1,2,3,4$ we have  $x_{j,i}$ is a positive minimal rotation.

\noindent Moreover there exists an isomorphisms $f_{1,i}:\Delta/K_{1,i} \rightarrow S_{i}$, where $K_{1,i}=\ker(\theta_{1,i})$. Call $P_{j,i}=f_{1,i}\left(\pi_{K_{1,i}}(z_{j,i})\right)$ then we have
\begin{eqnarray*}
    \delta_{P_{1,i}}(\theta_{1,i}(x_{1,i}))=\omega_{2^{n+1}}, &
    \delta_{P_{2,i}}(\theta_{1,i}(x_{2,i}))=\omega_{2^{n+1}}, \\
    \delta_{P_{3,i}}(\theta_{1,i}(x_{3,i}))=\omega_{2^{n}}, &
    \delta_{P_{4,i}}(\theta_{1,i}(x_{4,i}))=\omega_{2^{n}}
\end{eqnarray*}

\noindent By the theorem \ref{harvey} and the tables with the computes of $\delta_{P}$ (case (1), case (4) , case (5)) we have that if
$$P_{1,i}=[0,0,1], \quad  P_{2,i}=\tau_{2}(P_{1,i}), \quad  P_{3,i}=[1,0,1], \quad P_{4,i}=[\lambda,0,1]\, ,$$
then
    $$\left(\theta_{1,i}(x_{1,i}), \theta_{1,i}(x_{2,i}), \theta_{1,i}(x_{3,i}), \theta_{1,i}(x_{4,i}),\theta_{1,i}(x_{5,i})\right)=\left(\tau_{1}^{a}, \tau_{1}^{-3a-4}, \tau_{1}^{2a}, \tau_{1}^{2}, \tau_{1}^{2}\right)$$

\noindent We may do the same process for $H_{2}=<\tau_{2}>$. It follow there exist a Fuchsian group $\Gamma_{2,i}$ with signature  $(0;2^{n+1},2^{n+1},2^{n},2^{n},2^{n})$ and an epimorphism $\theta_{2,i}:\Gamma_{1,i} \rightarrow H_{2}$. The group  $\Gamma_{2,i}$ has a presentation

$$\Gamma_{2,i}=\left<y_{1,i},\cdots,y_{5,i}: y_{1,i}^{2^{n+1}}=y_{2,i}^{2^{n+1}}=y_{3,i}^{2^{n}}=y_{4,i}^{2^{n}}=y_{5,i}^{2^{n}}=1= y_{1,i}y_{2,i}y_{3,i}y_{4,i}y_{5,i}\right>$$
where for each $j=1,2,3,4$ we have  $y_{j,i}$ is a positive minimal rotation.

\noindent Moreover there exists  an isomorphisms $f_{2,i}:\Delta/K_{2,i} \rightarrow S_{i}$, where $K_{2,i}=\ker(\theta_{2,i})$. Call $Q_{j,i}=f_{2,i}\left(\pi_{K_{2,i}}(z_{j,i})\right)$. If
$$Q_{1,i}=[1,0,1], \quad  Q_{2,i}=[-1,0,1] , \quad  Q_{3,i}=[0,0,1], \quad Q_{4,i}=[\lambda,0,1]\,  .$$
then
    \begin{enumerate}
    \item For $c$ an even number
    $$\left(\theta_{2,1}(y_{1,1}), \theta_{2,1}(y_{2,1}), \theta_{2,1}(y_{3,1}), \theta_{2,1}(y_{4,1}), \theta_{2,1}(y_{5,1})\right)=\left(\tau_{2}^{a}, \tau_{2}^{a}, \tau_{2}^{2a}, \tau_{2}^{2}, \tau_{2}^{2}\right)$$

    \item For $c$  an odd number
    $$\left(\theta_{2,2}(y_{1,2}), \theta_{2,2}(y_{2,2}), \theta_{2,2}(y_{3,2}),\theta_{2,2}(y_{4,2}),\theta_{2,2}(y_{5,2})\right)=\left(\tau_{2}^{a}, \tau_{2}^{a(2^{n}+1)}, \tau_{2}^{2a}, \tau_{2}^{2}, \tau_{2}^{2}\right)$$

    \end{enumerate}

\begin{theo}\label{example}
Assume that $2^{n}-3a-5\neq  0$ and $a > 1$ (case (4) and case (5) theorem \ref{defi}). Then the actions induced by $H_{1}$ and $H_{2}$ on $S_{i} \in \mathfrak{S}_{i}$ for each  $i=1,2$, are directly topologically, but not conformally, equivalent, except for $S_{i}$ defined by $\lambda=\pm 1 \pm \sqrt{2}$.
\end{theo}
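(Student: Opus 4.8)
The plan is to establish the two claimed equivalences separately, using the machinery developed in Section 3. The key data are the generating vectors computed just above the statement: for $H_1 = \langle \tau_1\rangle$ the vector is $(\tau_1^a, \tau_1^{-3a-4}, \tau_1^{2a}, \tau_1^2, \tau_1^2)$, while for $H_2 = \langle \tau_2\rangle$ it is $(\tau_2^a, \tau_2^a, \tau_2^{2a}, \tau_2^2, \tau_2^2)$ when $c$ is even and $(\tau_2^a, \tau_2^{a(2^n+1)}, \tau_2^{2a}, \tau_2^2, \tau_2^2)$ when $c$ is odd. Under the identifications $H_1 \cong \mathbb{Z}/2^{n+1}\mathbb{Z} \cong H_2$ sending the chosen generator to $1$, these become $r$--tuples $(\xi_1,\dots,\xi_5)$ and $(\xi_1',\dots,\xi_5')$ in $\mathbb{Z}/2^{n+1}\mathbb{Z}$, and Theorem~\ref{topocyc} reduces direct topological equivalence to finding a single unit $s$ with $(s,2^{n+1})=1$ realizing $(\xi_1',\dots,\xi_5')\equiv s(\xi_1,\dots,\xi_5)\pmod{2^{n+1}}$ componentwise (after a permutation of the entries of equal order, as allowed by the Kuribayashi lemma).

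First I would prove the \emph{direct topological equivalence}. Writing the $H_1$--vector as $(a,-3a-4,2a,2,2)$ and the $H_2$--vector as $(a,a,2a,2,2)$ (even $c$) or $(a,a(2^n+1),2a,2,2)$ (odd $c$), I would search for a scalar $s$, coprime to $2^{n+1}$, and a permutation respecting the orders $(2^{n+1},2^{n+1},2^n,2^n,2^n)$, so that the congruence \eqref{congruence} holds. The entries of order $2^n$ are all $\pm 2$ or $2a$ and match up easily; the work is concentrated in the two entries of order $2^{n+1}$, namely pairing $\{a,-3a-4\}$ against $\{a,a\}$ or $\{a,a(2^n+1)\}$ via multiplication by $s$. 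Because $a$ is odd it is a unit mod $2^{n+1}$, so taking $s\equiv 1$ against $a$ and then solving $a\cdot s\equiv -3a-4$ (even case) or the analogue (odd case) for the second coordinate, and checking consistency across \emph{all} coordinates with a \emph{single} $s$, is the crux; once such an $s$ exists, Theorem~\ref{topocyc} delivers the orientation--preserving homeomorphism.

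Next I would prove the \emph{failure of conformal equivalence}. By the remark following Theorem~\ref{conf}, for $S_1=S_2=S_i$ conformal equivalence of the two actions means $H_1$ and $H_2$ are conjugate inside $\auto(S_i)$. Since $H_1=\langle\tau_1\rangle$ and $H_2=\langle\tau_2\rangle$ have distinct fixed--point sets on $S_i$ (the $\delta_P$ tables show $\tau_1$ fixes $\{P_0,Q_0\}$ while $\tau_2$ fixes $\{[1,0,1],[-1,0,1]\}$, and $\tau_2$ carries $P_0$ to $Q_0$), any conformal conjugation $t$ with $tH_1t^{-1}=H_2$ would have to be an automorphism of $S_i$ that carries the fixed points of $\tau_1$ to those of $\tau_2$, hence permutes the branch locus $\{0,\infty,\pm1,\pm\lambda,\pm\lambda^{-1}\}$ of $\pi$ in a prescribed way. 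Because $\auto(S_i)=G_i$ except for finitely many surfaces (Theorems~\ref{accionc},~\ref{accionn}), $t$ must lie in $G_i$, and I would check directly that no element of $G_i$ conjugates $\langle\tau_1\rangle$ onto $\langle\tau_2\rangle$ — concretely, conjugation by elements of $G_i$ fixes each cyclic subgroup's rotation data $\delta_P$ by the invariance noted after Theorem~\ref{delta}, and the tables show $H_1$ and $H_2$ have incompatible $\delta_P$--values at corresponding points, so no such $t$ exists.

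The main obstacle I anticipate is twofold. First, on the algebraic side, verifying that a \emph{single} unit $s$ simultaneously solves all the coordinate congruences in \eqref{congruence} requires a careful $2$--adic computation modulo $2^{n+1}$, separating the even--$c$ and odd--$c$ cases and using $2^{n-1}c=2a+2$; this is where a naive guess for $s$ can fail on one coordinate. Second, the non--conformal direction must account for the exceptional surfaces: the excluded values $\lambda=\pm1\pm\sqrt2$ are precisely those for which $S_i$ acquires extra automorphisms beyond $G_i$ (or an unexpected symmetry of the branch set making $H_1$ and $H_2$ conjugate), so I would need to identify these as the finitely many exceptions from Theorems~\ref{accionc} and~\ref{accionn} and confirm that outside them $\auto(S_i)=G_i$ forces the two subgroups to remain non--conjugate.
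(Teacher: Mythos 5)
Your first half (direct topological equivalence) follows the paper's route, but you leave as an open ``crux'' what is in fact the whole point, and you overcomplicate it: no permutation of entries and no search for a unit $s$ is needed. The paper observes that $s=1$ already works, because the defining relation $2^{n-1}c=2a+2$ gives $-3a-4=a-2(2a+2)\equiv a-2^{n}c \pmod{2^{n+1}}$, which is $\equiv a$ when $c$ is even and $\equiv a+2^{n}\equiv a(2^{n}+1)$ when $c$ is odd (using that $a$ is odd); so the two generating vectors are congruent componentwise and Theorem~\ref{topocyc} applies at once. Had $s=1$ failed, note that Theorem~\ref{topocyc} as stated does not allow per--coordinate scalars, so your plan of ``taking $s\equiv 1$ against $a$ and then solving $a\cdot s\equiv -3a-4$ for the second coordinate'' would not even be admissible.

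The genuine gap is in your second half. You propose to reduce to $t\in G_{i}$ via $\auto(S_{i})=G_{i}$ ``except for finitely many surfaces'' and then to identify those exceptions with $\lambda=\pm1\pm\sqrt{2}$. This cannot work: the finiteness in Theorems~\ref{accionc} and~\ref{accionn} comes from Singerman's list and dimension counts, which prove the exceptional set is finite but give no way to locate it, so your argument only yields non--equivalence off an \emph{unspecified} finite set, strictly weaker than the theorem, which asserts non--equivalence for \emph{every} $\lambda\neq\pm1\pm\sqrt{2}$ (including any surface that happens to carry extra automorphisms). The paper avoids genericity entirely: given an arbitrary $\sigma\in\auto(S_{i})$ with $\sigma\tau_{1}\sigma^{-1}=\tau_{2}^{j}$, it notes $\sigma$ normalizes $H_{3}=\left<\tau_{1}^{2}\right>=\left<\tau_{2}^{2}\right>$ (since $\tau_{1}^{2}=\tau_{2}^{2}$), hence induces a M\"obius transformation $T$ on $S_{i}/H_{3}=\widehat{\mathbb{C}}$ preserving $\{0,\infty,\pm1,\pm\lambda,\pm\lambda^{-1}\}$ with $T(\{0,\infty\})=\{1,-1\}$, then descends further through $\pi_{1}(x)=x^{2}$ and $\pi_{2}(x)=x+\frac{1}{x}$ to a M\"obius transformation $R$ satisfying $R(\{0,\infty,1,\lambda^{2},\lambda^{-2}\})=\{\infty,\pm2,\pm(\lambda+\lambda^{-1})\}$ with $R(0),R(\infty)\in\{\pm2\}$; solving for $R$ (e.g.\ $R(z)=\frac{2z+2}{-z+1}$) and imposing $R(\lambda^{2})=\pm(\lambda+\lambda^{-1})$ forces $\lambda=\pm1\pm\sqrt{2}$, the exact exceptional set. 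Finally, your auxiliary claim that the tables exhibit ``incompatible $\delta_{P}$--values'' for $H_{1}$ and $H_{2}$ is false: the rotation data match (e.g.\ $j_{0}=j_{1}$, both inverses of $a$ mod $2^{n+1}$), as they must, since rotation numbers are preserved under orientation--preserving conjugacy and you have just shown the actions are directly topologically equivalent. The in--$G_{i}$ non--conjugacy you want is instead immediate for group--theoretic reasons ($G_{1}$ is abelian, and $\left<\tau_{1}\right>$ is normal in $G_{2}$), but, as explained, it does not suffice to prove the theorem.
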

\begin{proof}
\noindent We consider the isomorphism $\Phi :H_{j}  \rightarrow \mathbb{Z}/2^{n+1} \mathbb{Z} $ and  given by $\Phi(\tau_{j})=1$.

\noindent With this isomorphism we may associate to each generating vector a 5-tuple of elements in $\mathbb{Z}/2^{n+1} \mathbb{Z}$.
\noindent Thus we have
\begin{itemize}
    \item If $c$ is an even number, then
    $$(a,a,2a,2,2) \equiv (a,-3a-4,2a,2,2) \mod 2^{n+1}\, .$$
    \item If $c$ is an odd number, then
    $$(a,a(2^{n}+1),2a,2,2) \equiv (a,-3a-4,2a,2,2) \mod 2^{n+1}\, .$$
\end{itemize}

\noindent By theorem \ref{topocyc} we have in these cases $s=1$, then the actions are directly topologically equivalent.

\noindent Now we will prove that for $S_{i}$ defined by $\lambda \neq \pm 1 \pm \sqrt{2}$ the actions on $S_{i}$ are not conformally equivalent.

\noindent We will follow the idea of the proof given by G.Gonz\'alez-Diez and R.Hidalgo \cite{gabrub} in the case $n=2$ with $a=1$ and $c=2$.

\noindent By contradiction we suppose that are conformally equivalent; that is, there exists  $\sigma$ on $\auto(S_{i})$
such that $\sigma \tau_{1}=\tau_{2}^{j} \sigma\,$ , where $j$ is an odd number.

\noindent  Now we consider the holomorphic branched covering  associated to the action of $H_{3}=<\tau_{1}^{2}>$ on $S_{i}$, this is $\pi:  S_{i} \longrightarrow \mathbb{C}$ given by $\pi(x,y)=x$.

\noindent Hence we have the following diagram
    \[\begin{diagram}
    \node{S_{i}} \arrow{s,l}{ \pi } \arrow{e,t}{\sigma} \node{S_{i}}  \arrow{s,r}{\pi} \\
    \node{S_{i}/H_{3}=\widehat{\mathbb{C}}} \arrow{e,t,..} {T} \node{\widehat{\mathbb{C}}=S_{i}/H_{3}}
    \end{diagram}\]
where $T$  is given by $T(x):=\pi\left(\sigma(x,y)\right)$\, , for $x\in \widehat{\mathbb{C}}$, where $(x,y)\in \pi^{-1}(x)$.

\noindent It is not difficult prove that $T$ is a M\"obius transformation.

\noindent  Consider the set $B$ of the fixed points of $H_{3}$ (\eqref{ramipoints}).

\noindent Note that $\pi(B)= \{0,\infty,\pm 1, \pm \lambda, \pm \lambda^{-1}\}$ and $\sigma(B)=B$ then $T\left(\pi(B)\right)=\pi(B)\, .$ Furthermore $T(0),T(\infty) \in \{1,-1\}$. This is because $\sigma$ map the fixed points of $\tau_{1}$ on the fixed points of $\tau_{2}$.

\noindent Now  consider the coverings associated to the subgroups $H_{1}$ and $H_{2}$. Then we have the following commutative diagram
    \[\begin{diagram}
    \node{\widehat{\mathbb{C}}} \arrow{s,l}{\pi_{1}} \arrow{e,t}{T}
    \node{\widehat{\mathbb{C}}} \arrow{s,r}{\pi_{2}} \\
    \node{S_{i}/H_{1}=\widehat{\mathbb{C}}}  \arrow{e,t,..} {R}
    \node{\widehat{\mathbb{C}}=S_{i}/H_{2}}
    \end{diagram}\]
where $\pi_{1}(x)=x^{2}$, $\pi_{2}(x)=x+\frac{1}{x}$ and $R(x)=\pi_{2}T(x_{0})$, with $\pi_{1}(x_{0})=x$.

\noindent For $T_{j}:\widehat{\mathbb{C}} \longrightarrow \widehat{\mathbb{C}}$ ($j=1,2$)  defined by $T_{1}(x)=-x$ and $T_{2}(x)=\frac{1}{x}$ it has that  $T_{j}\pi = \pi \tau_{j} $ ($j=1,2$). Remark that for each $j$, $\pi_{j}$ is the covering associated to the action $T_{j}$. Using these properties we may prove $R$ is a M\"obius transformation.

\noindent Recall   $T(\pi(B))=\pi(B)=\{0,\infty,1, \lambda^{2}, \lambda^{-2}\}\, $ and $T(\{0,\infty\})=\{1,-1\}$. Then we have
\begin{equation}\label{conditionR}
        R\left(\{0,\infty,1, \lambda^{2}, \lambda^{-2}\}\right)=\{\infty , \pm 2 ,\pm \left(\lambda+\lambda^{-1}\right)\}\, ,
\end{equation}
and  $R(0),R(\infty) \in \{2,-2\}$.

\noindent In the case  $R(0)=2$\, ,  $R(\infty)=-2$ and $R(1)=\infty$, we have $R(z)=\dfrac{2z+2}{-z+1}$. In particular, $R(\lambda^{2})=\dfrac{2(\lambda^{2}+1)}{1-\lambda^{2}}$, which must be (by \eqref{conditionR} ) equal to $\pm (\lambda + \lambda^{-1})$. From this, and the fact that $\lambda^{2} \neq 1$, we deduce that $\lambda=\pm 1\pm \sqrt{2}$.
\noindent In the other case we have the same values as before for $\lambda$.

\noindent It follow that the action induced by $H_{1}$ and $H_{2}$ are not conformally equivalent for $\lambda \neq \pm 1 \pm \sqrt{2}$.

\end{proof}

\noindent Remark in the case (1), theorem \ref{defi} ($n=3$ \, ,  $a=1$ and $c=1$), it follow the  consequences of the preceding theorem. In this case the generating vectors are
 $$H_{1}: (\tau_{1}, \tau_{1}^{9}, \tau_{1}^{2}, \tau_{1}^{2}, \tau_{1}^{2}) \, ,  \quad H_{2}: (\tau_{2}, \tau_{2}^{9}, \tau_{2}^{2}, \tau_{2}^{2}, \tau_{2}^{2})$$
and the proof that the actions are not conformally equivalent is the same as that for the preceding theorem.

\bigskip
\noindent If $\lambda= \lambda_{0}=1+\sqrt{2}$, then the automorphism $\sigma$ is given by
$$\sigma(x,y)=\left(\frac{1-x}{1+x},\frac{\sqrt{2^{c}}\, \omega_{2^{n+1}}y}{(x+1)^{c}}\right)$$

\noindent  In both cases ($c$ is an even number or $c$ is an odd number) the group generated by $\tau_{1}$\, , $\tau_{2}$ and $\sigma$ acts on the Riemann surfaces defined by $\lambda_{0}$ with signature $(0;2^{n+1},2^{n+1},4)$ .

\end{document}